\definecolor{codegreen}{rgb}{0,0.6,0}
\definecolor{codegray}{rgb}{0.5,0.5,0.5}
\definecolor{codepurple}{rgb}{0.58,0,0.82}
\definecolor{backcolour}{rgb}{0.95,0.95,0.92}
\lstdefinestyle{list_style}{
  backgroundcolor=\color{backcolour}, commentstyle=\color{codegreen},
  keywordstyle=\color{magenta},
  numberstyle=\tiny\color{codegray},
  stringstyle=\color{codepurple},
  basicstyle=\ttfamily\footnotesize,
  breakatwhitespace=false,         
  breaklines=true,                 
  captionpos=b,                    
  keepspaces=true,                 
  numbers=left,                    
  numbersep=5pt,                  
  showspaces=false,                
  showstringspaces=false,
  showtabs=false,                  
  tabsize=2
}
\newcommand{\xdasharrow}[2][->]{
\tikz[baseline=-\the\dimexpr\fontdimen22\textfont2\relax]{
\node[anchor=south,font=\scriptsize, inner ysep=1.5pt,outer xsep=2.2pt](x){#2};
\draw[shorten <=3.4pt,shorten >=3.4pt,dashed,#1](x.south west)--(x.south east);
}
}
\newcommand{\DEBUG}{}
  \def\rem#1{{\marginpar{\raggedright\scriptsize #1}}}
  \newcommand{\pmr}[1]{\rem{\color{blue}{$\bullet$ #1}}}
  \newcommand{\ppr}[1]{\rem{\color{red}{$\bullet$ #1}}}
  \newcommand{\ppr}[1]{}
  \newcommand{\pmr}[1]{}
\newcommand{\E}{{\mathbb E}}
\newcommand{\bigo}{\mathcal{O}}
\newcommand{\lip}{\operatorname{Lip}}
\newcommand{\cost}{\operatorname{cost}}
\def\rho{\varrho_1}
\def\rd{\,{\mathrm d}}
\theoremstyle{plain}
\newtheorem{theorem}{Theorem}
\newtheorem{lemma}{Lemma}
\newtheorem{fact}{Fact}
\theoremstyle{definition}
\begin{document}

\title
[On the randomized Euler scheme for SDEs with integral-form drift]
{On the randomized Euler scheme for SDEs with integral-form drift}

\author[P. Przyby{\l }owicz]{Pawe{\l } Przyby{\l }owicz}
\address{AGH University of Krakow,
	Faculty of Applied Mathematics,
	Al. A.~Mickiewicza 30, 30-059 Krak\'ow, Poland}
\email{pprzybyl@agh.edu.pl}

\author[M. Sobieraj]{Micha{\l } Sobieraj}
\address{AGH University of Krakow,
Faculty of Applied Mathematics,
Al. A.~Mickiewicza 30, 30-059 Krak\'ow, Poland}
\email{sobieraj@agh.edu.pl, corresponding author}

\begin{abstract}
In this paper, we investigate the problem of strong approximation of the solutions of stochastic differential equations (SDEs) when the drift coefficient is given in integral form. We investigate its upper error bounds, in terms of the discretization parameter $n$ and the size $M$ of the random sample drawn at each step of the algorithm, in different subclasses of coefficients of the underlying SDE presenting various rates of convergence. Integral-form drift often appears when analyzing stochastic dynamics of optimization procedures in machine learning (ML) problems. Hence, we additionally discuss connections of the defined randomized Euler approximation scheme with the perturbed version of the stochastic gradient descent (SGD) algorithm. Finally, the results of numerical experiments performed using GPU architecture are also reported, including a comparison with other popular optimizers used in ML. 
\newline
\newline
\textbf{Key words:} stochastic differential equations, randomized Euler algorithm, inexact information, Wiener process
\newline
\newline
\textbf{MSC 2010:} 65C30, 68Q25
\end{abstract}

\maketitle


\section{Introduction}
We investigate strong approximation of solutions of the following stochastic differential equations (SDEs) 
\begin{equation}
\label{main_equation}
	\left\{ \begin{array}{ll}
	\displaystyle{
	\rd X(t) = a(X(t))\rd t + b(X(t)) \rd W(t), \ t\in [0,T]},\\
	X(0)=\eta, 
	\end{array} \right.
\end{equation}
where $T \in [0, +\infty)$ and $W$ is a $m$-dimensional Wiener process defined on a complete probability space $(\Omega, \Sigma, \mathbb{P})$ with respect to the filtration $\{\Sigma_t\}_{t \geq 0}$ that satisfies usual conditions. Furthermore,
$\eta\in\mathbb{R}^d$, $b$ is $\mathbb{R}^{d \times m}$-valued mapping, and $a$ is $\mathbb{R}^{d}$-valued mapping such that
\begin{equation}
\label{drift_form}
    a(x)=\mathbb{E}(H(\xi,x))
\end{equation}
for the mapping $H:\mathcal{T} \times\mathbb{R}^d\to\mathbb{R}^d$ and a random element $\xi$ defined on the probability space $(\Omega, \Sigma, \mathbb{P})$ with values in measurable space $(\mathcal{T}, \mathcal{M}, \mu)$, where $\mu$ is the law of $\xi$.
We assume that the $\sigma$-fields $\sigma(\xi)$ and $\sigma(\bigcup\limits_{t \geq 0}\sigma(W(t)))$ are independent.
We are interested in the approximation of the value of $X(T).$


SDEs are the natural extension of Ordinary Differential Equations (ODEs). The usage of the Wiener process allows one to incorporate random fluctuations into the trajectories of the solutions, thus making them random and more irregular. Analytical properties, including the existence and uniqueness of solutions of ODEs and SDEs, are respectively investigated in \cite{barbu} and \cite{cohen_elliott}. 

Such trajectories are able to model data coming from real-life phenomena that include the aforementioned fluctuations. 
Therefore, SDEs are for example widely used in Finance (see \cite{glasserman, MERTON} for option pricing, asset pricing, and applications in risk management) and more generally in time series analysis; see Section 11 in  \cite{sarkka_solin_2019} for the overview of various estimation techniques for SDE parameters from observed time series data. For more applications,  the reader is especially referred to Chapter 7 in \cite{kloeden} and \cite{HARTONO, Iversen, YANG}. 

Regarding the time series analysis, certain classes of ODEs and SDEs can model the trajectories of a descending gradient in the gradient descent algorithms given an optimization problem; see Section 5.2 in \cite{jentzen2023}. On the other hand, the vast majority of optimization problems that arise from Data Science lead to the equations that contain drift in the aforementioned form \eqref{drift_form} because of the random nature of the data; see \cite{pmlr-v40-Ge15}. 
Furthermore, it turns out that approximating the well-known Stochastic Gradient Descent (SGD) algorithm as SDE allows one to benefit from studying a continuous optimization trajectory while carefully preserving the stochasticity of SGD. 
The broad overview of the most popular gradient descent algorithms is provided in \cite{ruder2017overviewgradientdescentoptimization} including the aforementioned SGD algorithm.
For example, see \cite{Laborde2019NesterovsMW} and \cite{NIPS2014_f0969691} for the direct connection of the Nesterov optimization algorithm with ODEs. For the relevant literature connecting SGD with SDEs, the reader is especially referred to the recent advances provided in \cite{Latz, li2021validitymodelingsgdstochastic, malladi2024sdesscalingrulesadaptive, sirignano2017stochasticgradientdescentcontinuous}.

Nonetheless, in \cite{dauphin2014identifyingattackingsaddlepoint}, authors claim that the classical SGD algorithm is vulnerable to stucking in saddle points or at least being slowed down by them. In recent years new variants of SGD algorithms were studied to mitigate the negative impact of the occurrence of saddle points, thus resulting in the introduction of the so-called perturbed stochastic gradient descent (PSGD) algorithm. For the relevant literature on the PSGD algorithm, the reader is especially referred to \cite{pmlr-v80-daneshmand18a, du2017gradientdescentexponentialtime, pmlr-v40-Ge15, pmlr-v70-jin17a, pmlr-v75-jin18a, NEURIPS2018_217e342f}.

In many cases, the existence and uniqueness of the solutions of SDEs are guaranteed but the closed-form formulas are not known. It leads to the usage of numerical schemes for the
approximation of trajectories; see \cite{sauer}.
For papers investigating higher-order methods, the reader is referred to Chapter 17 in \cite{kloeden2}.

In recent years, the properties of so-called randomized Euler algorithms for the approximation of the solutions of differential equations were extensively investigated, which was motivated by the fact that for irregular drift coefficient i.e., discontinuous one w.r.t time variable, classical Euler algorithm fails to converge. See section 3 in \cite{PRZYB2014} for the example that shows a lack of convergence for SDEs in case the drift coefficient is only Borel measurable and the deterministic Euler algorithm is leveraged instead of the randomized one.
Currently, miscellaneous randomized numerical schemes are constructed and investigated on various classes of differential equations.
For example, in \cite{PRZYBYLOWICZ2024143} authors investigate the approximation of solutions to SDDEs under Carathéodory-type drift coefficients. 
On the other hand, research is also focused on point-wise approximation under the assumption that only inexact information about drift and diffusion coefficients is accessible i.e., coefficients are perturbed by some noise; see \cite{morkisz2017} for the construction of randomized Euler scheme under inexact information and investigation of its error and optimality.
In \cite{BOCHACIK2021101554}, authors show that the randomized two-stage Runge–Kutta scheme is the optimal method among all randomized algorithms on certain classes of ODEs based on standard noisy information.
In \cite{BOCHACIK2022}, authors analyze randomized explicit and implicit Euler schemes for ODEs with the right-hand side functions satisfying the Lipschitz condition globally or only locally. 
Finally, in \cite{bochacik2024convergence} authors add randomization to the implicit two-stage Runge-Kutta scheme to improve its rate of convergence and analyze its stability in terms of three notions.
Similarly for the infinite-dimensional setting, in \cite{pprzyb2022} authors introduce a truncated dimension randomized Euler scheme for SDEs driven by an infinite dimensional Wiener process, with additional jumps generated by a Poisson random measure.
Finally, in \cite{pprzyb2024} authors investigate the error of the randomized Milstein algorithm for solving scalar jump-diffusion stochastic differential equations and provide a complete error analysis under substantially weaker assumptions than those known in the literature additionally proving the optimality of the algorithm.

The structure of the paper is as follows. In Section \ref{sec:pre} we introduce the considered class
of SDEs \eqref{main_equation} with admissible coefficients defining the equation.
In Section \ref{sec:grad}, we describe a connection between various gradient descent algorithms and the corresponding variants of Euler schemes especially describing the connection between randomized Euler scheme for SDEs and PSGD algorithm. We stress that this section does not provide any new theoretical findings in terms of optimization but rather points out interesting relations between these two popular topics.
In Section \ref{sec:err}, we define a randomized Euler algorithm for the strong approximation of the solutions of \eqref{main_equation} and investigate its properties, including its upper $L^{p}$-error bound. 
Our theoretical results are supported by numerical
experiments described in Section \ref{sec:numer}. Therefore, we also provide the key elements of our
current CUDA C algorithm implementation. 
As a byproduct, in numerical experiments, we also show that the method can be used to search for the local minima.
Nevertheless, there are no strict theoretical results on finding the actual minima. 
In Section \ref{sec:conc}, we summarize our main findings and list future research.
Finally, in Section \ref{sec:conc}, we state basic facts and provide proofs for the auxiliary lemmas.

\section{Preliminaries}
\label{sec:pre}
We denote by $\mathbb{N}=\{1,2,\ldots\}$. Let  $W = \{W(t)\}_{t\geq 0}$ be a standard $m$-dimensional Wiener process defined on a complete probability space  $(\Omega,\Sigma, \mathbb{P})$. By $\{\Sigma_t\}_{t\geq 0}$ we denote a filtration, satisfying the usual conditions, such that $W$ is a Wiener process with respect to $\{\Sigma_t\}_{t\geq 0}$. We set $\Sigma_{\infty}=\sigma\Bigl(\bigcup_{t\geq 0}\Sigma_t\Bigr)$. Next, let $\xi$ be the random element which is defined on the same probability space $(\Omega, \Sigma, \mathbb{P})$ with values in measurable space $(\mathcal{T}, \mathcal{M}, \mu)$, where $\mu$ is the law of $\xi$.
We assume that the $\sigma$-fields $\sigma(\xi)$ and $\sigma(\bigcup\limits_{t \geq 0}\sigma(W(t)))$ are independent.

By $\|\cdot \|$ we denote Frobenius norm for $\mathbb{R}^{d \times m}$ matrices i.e.,
\begin{equation*}
\|A\|:=\Bigl(\sum\limits_{i=1}^{d}\sum\limits_{j=1}^m |a_{i,j}|^2\Bigr)^{1/2} 
\end{equation*}
where $A=[a_{i,j}]_{i,j=1}^{d,m} \in \mathbb{R}^{d \times m}.$ Furthermore, let $0_{d \times m}$ denote a zero-matrix of size $d \times m,$ and let $I_{m}$ denote identity matrix of size $m \times m.$
Let $p\in [2,+\infty).$ By $L^p(\Omega)$-norm for either a random vector or a random matrix, we mean
\begin{equation*}
 \|Y\|_p := \left( \E\|Y\|^p\right)^{1/p}\qquad\mbox{where}\qquad Y:\Omega \to \mathbb{R}^m \mbox{ or } Y:\Omega \to\mathbb{R}^{d\times m}.
\end{equation*}

For $T \in [0, +\infty)$, we are interested in strong approximation of the solutions $(X(t))_{t \in [0, T]}$ of \eqref{main_equation}. 
The coefficients of the equation are assumed to satisfy the following conditions.

First, we consider only deterministic initial values i.e.,
\begin{itemize}
\item[(A0)] $\eta \in \mathbb{R}^{d}.$ 
\end{itemize}
Next, let $K\in [0, +\infty)$ and let $L: \mathcal{T} \to [0, +\infty)$ be the mapping such that $L \in L^{p}(\mathcal{T}, \mathcal{M}, \mu)$. Similarly we extend previous notation to $\|L\|_{1} := \int\limits_{\mathcal{T}}L(t)\mu(\rd t)$ and $\|L \|_{p} := \|L\|_{{L^P(\mathcal{T}, \mathcal{M}, \mu)}}$. 
The mapping $H:\mathcal{T} \times\mathbb{R}^d\to\mathbb{R}^d$ belongs to $\mathcal{A}(L,K,p)$ iff:
\begin{itemize}
\item[(H1)]$H$ is $\mathcal{M}\otimes\mathcal{B}(\mathbb{R}^{d})/\mathcal{B}(\mathbb{R}^{d})$ measurable, 
\item[(H2)] $\mathbb{E}\|H(\xi, 0)\|^p \leq K,$
\item[(H3)]
$\| H(t,x) - H(t,y) \| \leq L(t) \|x-y\|,$
for $\mu-$almost all $t\in \mathcal{T}$ and all $x,y \in \mathbb{R}^d.$
\end{itemize}
We further define subclasses of $\mathcal{A}(L,K,p)$ based on argument dependence of $H$. 
Let $\mathcal{A}_{1}(K,p)$ denote a subclass of $\mathcal{A}(L, K, p)$ such that $H$ depends only on time variable $t.$ The strict definition of this subclass is
\begin{equation*}
    \mathcal{A}_{1}(L,K,p) := \Big{\{} H \in \mathcal{A}(L, K,p): H(t, x) = H(t, 0) \ \hbox{for} \ \mu\hbox{-almost all} \ t \in \mathcal{T} \ \hbox{and all} \ x \in \mathbb{R}^{d} \Big{\}}.
\end{equation*}
Similarly, let $\mathcal{A}_{2}(L,p)$ denote a subclass of $\mathcal{A}(L, K, p)$ such that $H$ depends only on the spatial variable $x.$ In a similar manner, this subclass can be defined as
\begin{equation*}
    \mathcal{A}_{2}(L,K,p) := \Big{\{} H \in \mathcal{A}(L, K,p): H(t,x) = H(0, x) \ \hbox{for} \ \mu\hbox{-almost all} \ t \in \mathcal{T} \ \hbox{and all} \ x \in \mathbb{R}^{d} \Big{\}}.
\end{equation*}
The admissible drift functions $a:\mathbb{R}^d \to \mathbb{R}^d$ are assumed to be of the form 
\begin{equation*}
a(x)=\mathbb{E}(H(\xi,x))
\end{equation*}
where $H$ belongs to $\mathcal{A}(L,K,p)$ or more strictly one of its subclasses.
A mapping $b:\mathbb{R}^d\to\mathbb{R}^{d\times m}$ belongs to the class of Lipschitz continuous functions $\lip(K)$ iff:
\begin{itemize}
    \item[(B0)] $\|b(x) - b(y)\| \leq K\|x-y\|,$
    for all $x,y \in \mathbb{R}^d.$
\end{itemize}
We define the general class of admissible tuples $(H,b)$ as follows
\begin{equation*}
    \mathcal{F}(L,K,p)=\mathcal{A}(L,K,p)\times\lip(K).
\end{equation*}
We also define the following subclasses of admissible coefficients where diffusion is always identically equal to zero.
Let
\begin{equation*}
\mathcal{F}_{0}^{1}(K,p)= \mathcal{A}_{1}(K,p) \times \{0\},
\end{equation*}
\begin{equation*}
\mathcal{F}_{0}^{2}(L,p)=\mathcal{A}_{2}(L,p) \times \{0\}
\end{equation*}
and
\begin{equation*}
\mathcal{F}_{0}(L,K,p)=\mathcal{A}(L,K,p) \times \{0\}.
\end{equation*}
Finally, we use the following notation of asymptotic equalities. For functions $f,g:\mathbb{N}\times \mathbb{N}\to [0,+\infty)$ we write $ 	f(n,M)=\mathcal{O}(g(n,M))$ iff there exist $C>0, n_0, M_0\in\mathbb{N}$ such that for all $n\geq n_0, \ M \geq M_0$ it holds $f(n,M)\leq Cg(n,M)$. Furthermore, we write $f(n,M)=\Omega(g(n,M))$ iff $ g(n,M)=\mathcal{O}(f(n,M))$,
and $f(n,M)=\Theta(g(n,M))$ iff $f(n,M)=\mathcal{O}(g(n,M)) \ \hbox{and} \ f(n,M)=\Omega(g(n,M))$. Unless otherwise stated all constants appearing in estimates and in the "$\mathcal{O}$", "$\Omega$", "$\Theta$" notation will only depend on $\eta, T$ and the parameters of the class $\mathcal{F}(L, K, p)$. Moreover, the same letter might be used to denote different constants.

%
\section{Connection with gradient descent algorithms}
\label{sec:grad}
In this section, we describe the connection between various variants of gradient descent algorithms and Euler schemes for the approximation of the solutions of differential equations.
In particular, the stochastic gradient descent algorithms are related to Robbins–Monro method, so readers are especially referred to \cite{Robbins1951ASA}. On the other hand, 
we would like to 
\underline{stress} that the aim of this chapter is not to thoroughly review or prove any theoretical results in the context of gradient descent algorithms, but rather point out that there is a connection with strong approximation of SDEs. 
For instance, this connection may lead to interesting example equations that originate from Machine Learning.

First, let us consider the following optimization problem
\begin{equation}
\label{eq_optimization}
\min_{x \in \mathbb{R}^{d}}f(x)
\end{equation}
where $f \in C^{1}(\mathbb{R}^d; \mathbb{R}).$ 
Together with \eqref{eq_optimization}, we associate
the following system of ODEs
\begin{equation}
\label{ode_gradient}
\begin{cases}
& x'(t) = -\nabla f(x(t)), \ t \in [0, +\infty) \\
& x(0) = x_0.
\end{cases}
\end{equation}
Such systems are called gradient systems. See \cite{9992691} for more examples including ODE that models Nesterov’s accelerated gradient optimization algorithm. If
gradient ${\mathbb{R}^d \ni x \mapsto \nabla f(x) \in \mathbb{R}^{d}}$ is continuous and of at most linear growth, then initial-value problem \eqref{ode_gradient} has at least one solution on $[0,T]$ for $T>0$ which is essentially a well-known Peano's theorem. We refer to Theorem (2.12) on page 252 in \cite{andres} where an even more general version of Peano’s theorem was established. If one additionally assumes that $\nabla f$ is locally Lipschitz continuous, then the existence and uniqueness of the solution to \eqref{ode_gradient} can be
derived from Theorem 2.2 on pages 104-105 in \cite{friedman}.

Let $x^{*}\in \mathbb{R}^{d}$ be an isolated critical point of $f$ which is also a local minimum. From Theorem 4.9 in \cite{barbu} (see page 142 for the proof), one obtains that if the initial-value $x_0$ is sufficiently close to $x^{*},$ then $\lim\limits_{t \to +\infty}x(t) = x^{*}$. Approaching
the local minimum $x^{*}$ of $f$ by observing the trajectory $x=x(t)$ while $t \to +\infty$ is sometimes called \textit{continuous learning}.
For $h > 0$ and $t_k = kh, k = 0,1,\ldots,$ the Euler scheme for \ref{ode_gradient} takes the form
\begin{equation*}
\begin{cases}
y_{k+1} = y_k - h \nabla f(y_k), \ k=0,1,\ldots \\
y_0 = x_0.
\end{cases}
\end{equation*}
This is the well-known \textit{gradient descent} (GD) algorithm for searching a (local) minimum of $f$ when starting
from $x_0$. In this context, the step-size $h$ can be interpreted as the learning rate hyperparameter. Using the GD
method to minimize $f$ is also called \textit{discrete learning}.

We discussed the origins of the GD method from the perspective of gradient ODEs. We now show how to obtain its stochastic counterpart. 
Suppose $(\mathcal{T}, \mathcal{M}, \mu)$ is a probability space and $\xi:\Omega \mapsto \mathcal{T}$ is a random element. We usually assume that real-life data samples are independent realizations of $\xi$ which is further called an apriori distribution of the data.
Regarding this random nature of the data, in practice, it usually comes down to the optimization problems which include the expectation formula i.e.,
\begin{equation*}
f(x) = \mathbb{E}(g(\xi, x))
\end{equation*}
is to be optimized whereas $g$ is usually referred to as a loss function; see \cite{pmlr-v40-Ge15}. For example, let $\mathcal{T} = \mathbb{R}^{2}, \mathcal{M} = \mathcal{B}(\mathbb{R}^{2}),$ and let $\mu$ denote any distribution on $(\mathbb{R}^{2}, \mathcal{B}(\mathbb{R}^{2})).$ In supervised learning which is one of the branches of machine learning, one's interest is to find the best mapping $h_x(\cdot),$ depending on some parameters $x \in \mathbb{R}^{d},$ such that $h_{x}(\xi_1) \approx \xi_2$ with the usage of algorithms that successively learn from the data. Especially, $h_{x}(\cdot)$ may denote any artificial neural network with weights $x \in \mathbb{R}^{d}.$ In that case, the approximation error can be measured with mean squared error function i.e., loss function of the form $g(\xi_1, \xi_2, x) = (h_{x}(\xi_1)-\xi_2)^{2}.$ 
Note that usually only a finite number of data samples can be leveraged. Hence, let $(\xi^{i})_{i=1}^{M}$ denote $M$ independent data samples from the common distribution of $\xi.$ The expectation above is therefore approximated with
Monte Carlo sum i.e.,
\begin{equation*}
f(x) \approx \frac{1}{M}\sum_{i=1}^{M}g(\xi^{i}, x)
\end{equation*}
and
\begin{equation*}
\nabla f(x) \approx \frac{1}{M}\sum_{i=1}^{M}\nabla g(\xi^{i}, x), 
\end{equation*}
if $g$ is regular enough. This approach is called \textit{batch gradient descent} as it processes the whole dataset $(\xi^{i})_{i=1}^{M}.$ Furthermore, one can assume that the number of data samples $M$ varies per optimization step which can lead to a smaller memory footprint and potential speedups. Suppose the dataset is split into $k_0\in \mathbb{N}$ sequences $(\xi_{i}^{k+1})_{i=1}^{M_{k}}$ for $k=0,1\ldots,k_0-1,$ called batches. 
In summary, we arrive at the following scheme
\begin{equation*}
\begin{cases}
y_{k+1} = y_k - \frac{h}{M_k}\sum\limits_{i=1}^{M_k}\nabla g(\xi_{i}^{k+1}, y_k), \ k=0,1,\ldots \\
y_0 = x_0.
\end{cases}
\end{equation*}
If $M_k=1$ for all $k=0,1\ldots,k_0-1,$ then the approach is called \textit{stochastic gradient descent} (SGD). On the other hand, if $1<M_k<M,$ it is called \textit{mini batch gradient descent} (MBGD). One may also add some random Gaussian noise in each step to avoid getting stuck in a plateau. Let $(\Delta W_k)_{k=0}^{+\infty}$ denote independent Gaussian random variables with zero-mean and unit-variance, then the scheme can be re-defined as
\begin{equation}
\label{PSGD}
\begin{cases}
y_{k+1} = y_k - \frac{h}{M_k}\sum\limits_{i=1}^{M_k}\nabla g(\xi_{i}^{k+1}, y_k) + \sigma \Delta W_{k}, \ k=0,1,\ldots \\
y_0 = x_0,
\end{cases}
\end{equation}
for some $\sigma > 0,$ and is called \textit{perturbed stochastic gradient descent} (PSGD).
It turns out that the introduced PSGD scheme \eqref{PSGD} is a variant of the randomized Euler scheme for SDEs of the form \eqref{main_equation}, which is introduced and investigated in the next section of this paper; see \cite{lucchi2022theoretical}
where the case of fractional Wiener noise case was considered.

%
\section{Error of the randomized Euler scheme}
\label{sec:err}
In this section, we introduce a randomized Euler scheme for the strong approximation of solutions of \eqref{main_equation} and investigate its $p$-th moments, the algorithm's informational cost, and error upper bounds.

If the explicit formula of $a$ is known, the standard Euler-Maruyama scheme can be leveraged to approximate the value of $X(T)$. The scheme is defined as
\begin{equation}
\label{standard_scheme}
	\begin{cases}
		X_{n}^{E}(0) = \eta, \\
		X_{n}^{E}(t_{k+1}) = X_{n}^{E}(t_{k}) + a(X_{n}^{E}(t_{k}))h + b(X_{n}^{E}(t_k))\Delta W_k, \\
 \quad k=0,1,\ldots, n-1
\end{cases}
\end{equation}
Its $L^{p}(\Omega)$-error, for $p \geq 2,$ is proportional to 
$\bigo(n^{-1/2})$; see Theorem 10.2.2 in \cite{kloeden}.
By the standard Euler-Maruyama algorithm, we mean the evaluation of $X_{n}^{E}(T)$ via the standard Euler-Maruyama scheme.
In this paper, we define the complexity of an algorithm in terms of the information-based complexity framework; see \cite{ibcbook}.
By the informational cost of $X_n^{E}(T)$, we mean the total number of scalar evaluations of $a,b$ and $W$.
Hence, the informational cost of the standard Euler-Maruyama algorithm is $\Theta(n).$

In the next part, we introduce a randomized Euler algorithm that can only access the information about $H$ in case $a$ is unknown. Let $n, M \in \mathbb{N}, h=T/n$ and $t_k = k h$ for $k=0,1. \ldots, n.$ randomized Euler scheme is defined as
\begin{equation}
\label{main_scheme}
	\begin{cases}
		X_{n,M}^{RE}(0) = \eta, \\
		X_{n,M}^{RE}(t_{k+1}) = X_{n,M}^{RE}(t_{k}) + \frac{h}{M} \sum \limits_{j=1}^{M} H(\xi_j^{k+1} , X_{n,M}^{RE}(t_{k})) + b(X_{n,M}^{RE}(t_{k}))\Delta W_k, \\
 \quad k=0,1,\ldots, n-1
	\end{cases}
\end{equation}
where $(\xi_{j}^{k})_{j,k}$ is $\mu$-distributed i.i.d sequence and $\sigma\Big{(}\bigcup\limits_{k\geq 0, j\geq 1} \sigma(\xi_{j}^{k})\Big{)} \perp\!\!\!\perp \sigma\Big{(}\bigcup\limits_{t \geq 0}\mathcal{F}_{t}^{W}\Big{)}$ which means the randomization is independent of driving Wiener process. 
Consequently, \\
$\sigma\Big{(}X_{n,M}^{RE}(t_k)\Big{)} {\perp\!\!\!\perp} \sigma\Big{(} \xi_{1}^{k+1},\ldots,\xi_{M}^{k+1} \Big{)}$ for all $k=1,\ldots,n$; see the first part of the proof of Lemma \ref{lem_a_diff}. 
The continuous version of the randomized Euler scheme is defined as
\begin{equation*}
\tilde{X}_{n,M}^{RE}(t) = \eta + \int\limits_{0}^{t} \sum\limits_{k=0}^{n-1}\frac{1}{M}\sum\limits_{j=1}^{M}H(\xi_j^{k+1}, \tilde{X}_{n,M}^{RE}(t_k))\mathds{1}_{(t_k, t_{k+1}]}(s) \rd s + \int\limits_{0}^{t}\sum\limits_{k=0}^{n-1}b(\tilde{X}_{n,M}^{RE}(t_k))\mathds{1}_{(t_k, t_{k+1}]}(s) \rd W(s)
\end{equation*}
for $t \in [0, T].$ The continuous version of standard Euler scheme $\tilde{X}_{n}^{E}$ is defined analogously. 
From mathematical induction, one obtains that
\begin{equation*}
\tilde{X}_{n,M}^{RE}(t_k) = X_{n,M}^{RE}(t_k)
\end{equation*}
for all $n$ and $M.$ 
Furthermore, from fact \ref{fact_finite_alg}, the randomized Euler scheme takes finite values.
Based on this auxiliary observation, the following Lemma can be formulated.

\begin{lemma}
\label{lemma_finite_val}
There exists constant $C\in [0,+\infty)$ depending only on $\eta, T$ and parameters of the class $\mathcal{F}(L,K,p),$ such that for all $(H, b) \in \mathcal{F}(L,K,P), M,n \in \mathbb{N}$ the following condition holds
\begin{equation*}
\sup\limits_{0 \leq t \leq T} \mathbb{E}\| \tilde{X}_{n,M}^{RE}(t)\|^p \leq C.
\end{equation*}
\end{lemma}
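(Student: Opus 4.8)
The plan is to run a Gronwall argument for the function
\[
\varphi_{n,M}(t):=\sup_{0\le s\le t}\E\bigl\|\tilde X_{n,M}^{RE}(s)\bigr\|^p ,\qquad t\in[0,T],
\]
while tracking constants carefully so that the final bound depends only on $\eta$, $T$ and the parameters $L,K,p$, and is \emph{independent} of $n$ and $M$. As a preliminary step I would note that, by Fact \ref{fact_finite_alg}, the scheme is well defined and a.s.\ finite, and that moreover $\E\|\tilde X_{n,M}^{RE}(t_k)\|^p<\infty$ for each fixed $k,n,M$: this follows by induction on $k$ from \eqref{main_scheme}, since $\tilde X_{n,M}^{RE}(t_{k+1})$ is assembled from $\tilde X_{n,M}^{RE}(t_k)$, the fresh i.i.d.\ sample $\xi_1^{k+1},\dots,\xi_M^{k+1}$ and the increment $\Delta W_k$, and (H2), (H3) (with $L\in L^p(\mathcal T,\mathcal M,\mu)$) together with (B0) propagate finite $p$-th moments. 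In particular $\varphi_{n,M}(t)<\infty$ for all $t$, which is exactly what makes the subsequent Gronwall step legitimate.

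Next, fix $t\in[0,T]$ and pass to the continuous representation of the scheme. Writing $k(s)$ for the index with $s\in(t_{k(s)},t_{k(s)+1}]$, one has
\[
\tilde X_{n,M}^{RE}(t)=\eta+\int_0^t \frac1M\sum_{j=1}^M H\bigl(\xi_j^{\,k(s)+1},\tilde X_{n,M}^{RE}(t_{k(s)})\bigr)\rd s+\int_0^t b\bigl(\tilde X_{n,M}^{RE}(t_{k(s)})\bigr)\rd W(s),
\]
and one applies $\|x+y+z\|^p\le 3^{p-1}(\|x\|^p+\|y\|^p+\|z\|^p)$. For the drift integral, two successive applications of Jensen's inequality — first against normalized Lebesgue measure on $[0,t]\subseteq[0,T]$, then against the average over $j=1,\dots,M$ — bound its $p$-th moment by $T^{p-1}\int_0^t \frac1M\sum_{j}\E\|H(\xi_j^{k(s)+1},\tilde X_{n,M}^{RE}(t_{k(s)}))\|^p\rd s$. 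The linear-growth bound $\|H(u,x)\|^p\le 2^{p-1}(\|H(u,0)\|^p+L(u)^p\|x\|^p)$ coming from (H3), the independence of $\xi_j^{k(s)+1}$ from $\tilde X_{n,M}^{RE}(t_{k(s)})$ recorded just before Lemma \ref{lem_a_diff}, and (H2) then give $\E\|H(\xi_j^{k(s)+1},\tilde X_{n,M}^{RE}(t_{k(s)}))\|^p\le 2^{p-1}(K+\|L\|_p^p\,\varphi_{n,M}(s))$, using $t_{k(s)}\le s$. For the stochastic integral, the multidimensional Burkholder--Davis--Gundy inequality followed by Hölder's inequality in time (legitimate since $p\ge2$) bounds its $p$-th moment by $C_p\,T^{p/2-1}\int_0^t\E\|b(\tilde X_{n,M}^{RE}(t_{k(s)}))\|^p\rd s$, and the linear growth of $b$ (a consequence of (B0)) gives $\E\|b(\tilde X_{n,M}^{RE}(t_{k(s)}))\|^p\le 2^{p-1}(\|b(0)\|^p+K^p\varphi_{n,M}(s))$.

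Collecting the three contributions and taking the supremum over the free time variable on the left-hand side (the right-hand side being already nondecreasing in it), one arrives at an inequality of the form
\[
\varphi_{n,M}(\tau)\le \alpha+\beta\int_0^\tau\varphi_{n,M}(s)\rd s,\qquad \tau\in[0,T],
\]
with $\alpha,\beta\in[0,+\infty)$ depending only on $\eta$, $T$ and the class parameters, and independent of $n,M$. Since $\varphi_{n,M}$ is finite on $[0,T]$ by the preliminary step, Gronwall's lemma yields $\varphi_{n,M}(T)\le\alpha e^{\beta T}=:C$, which is the assertion. The computations are entirely standard; the two places needing attention are (i) the a priori finiteness of $\varphi_{n,M}$, without which Gronwall cannot be invoked, and (ii) the use of the independence of $\tilde X_{n,M}^{RE}(t_k)$ from the newly drawn sample to factorize $\E\bigl[L(\xi_j^{k+1})^p\|\tilde X_{n,M}^{RE}(t_k)\|^p\bigr]=\|L\|_p^p\,\E\|\tilde X_{n,M}^{RE}(t_k)\|^p$ — it is exactly at this point that membership of $L$ in $L^p(\mathcal T,\mathcal M,\mu)$, rather than merely $L^1$, is used. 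The uniformity in $n$ and $M$ is automatic, because the mesh enters only through the evaluation points $t_{k(s)}\le s$, which are harmless in the integral inequality.
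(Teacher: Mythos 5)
Your proposal is correct and follows essentially the same route as the paper's proof: the continuous representation with the $3^{p-1}$ split into initial value, drift and diffusion parts, Hölder/Jensen plus linear growth of $H$ (via independence of the fresh sample) for the drift term, Burkholder--Davis--Gundy plus Hölder and linear growth of $b$ for the stochastic integral, and finally Gronwall applied to $\sup_{0\le s\le t}\E\|\tilde X_{n,M}^{RE}(s)\|^p$, whose a priori finiteness is supplied by Fact \ref{fact_finite_alg}. Your explicit remark on factorizing $\E\bigl[L(\xi_j^{k+1})^p\|\tilde X_{n,M}^{RE}(t_k)\|^p\bigr]$ merely spells out what the paper hides inside Fact \ref{fact_basic_properties}(i).
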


The proof is postponed to the Appendix.

By the randomized Euler algorithm, we mean the evaluation of $X_{n, M}^{RE}(T)$ via the randomized Euler scheme.
Similarly, we define the complexity of the algorithm in terms of the information-based complexity framework. The algorithm evaluates the value of $W$ at $n$ points, $b(\cdot)$ at $n$ points and $H(\cdot, \cdot)$ at $n M$ points. Thus, $\cost(X_{n,M}^{RE}) = 2n + n M = \Theta(nM)$. 
One may notice that the informational cost differs from the informational cost of the standard (randomized) Euler scheme.

Finally, below we provide a theorem that establishes the error upper bounds for the algorithm $X_{n, M}^{RE}.$

\begin{theorem}
\label{thm_upper_bound}
There exist constants $C\in [0, +\infty)$ depending only on $\eta, T$ and parameters of the class $\mathcal{F}(L,K,p)$ or its respective subclasses, such that for all $n,M \in \mathbb{N},$ the following inequalities hold
\begin{itemize}
\item[(E1)]
\begin{equation*}
\| X(T) - X_{n,M}^{RE}(T) \|_{p} \leq C(n^{-1} + M^{-1/2}), \ \mbox{for all} \ (H,b) \in \mathcal{F}_{0}(L,K,p),
\end{equation*}
\item[(E2)]
\begin{equation*}
\| X(T) - X_{n,M}^{RE}(T) \|_{p} \leq C(nM)^{-1/2}, \ \mbox{for all} \ (H,b) \in \mathcal{F}_{0}^{1}(K,p),
\end{equation*}
\item[(E3)]
\begin{equation*}
\| X(T) - X_{n,M}^{RE}(T) \|_{p} \leq Cn^{-1}, \ \mbox{for all} \ (H,b) \in \mathcal{F}_{0}^{2}(L,p),
\end{equation*}
\item[(E4)]
\begin{equation*}
\label{eq_1}
\| X(T) - X_{n,M}^{RE}(T) \|_{p} \leq C(n^{-1/2} + M^{-1/2}), \ \mbox{for all} \ (H,b) \in \mathcal{F}(L,K,p).
\end{equation*}
\end{itemize}
\end{theorem}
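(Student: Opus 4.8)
The plan is to view the randomized scheme $X_{n,M}^{RE}$ as a perturbation of the standard Euler--Maruyama scheme $X_n^E$ driven by the (unknown, but well defined) exact drift $a=\E(H(\xi,\cdot))$, and to use the triangle inequality
\[
\|X(T)-X_{n,M}^{RE}(T)\|_p\leq\|X(T)-X_n^E(T)\|_p+\|X_n^E(T)-X_{n,M}^{RE}(T)\|_p ,
\]
so that the first summand is controlled by known discretization estimates and the second is a pure Monte Carlo error. Set $h=T/n$, write $e_m=X_{n,M}^{RE}(t_m)-X_n^E(t_m)$, and at each step decompose
\[
\tfrac1M\sum_{j=1}^M H(\xi_j^{k+1},X_{n,M}^{RE}(t_k))=a(X_{n,M}^{RE}(t_k))+R_k,\qquad R_k:=\tfrac1M\sum_{j=1}^M\big(H(\xi_j^{k+1},X_{n,M}^{RE}(t_k))-a(X_{n,M}^{RE}(t_k))\big).
\]
Because $X_{n,M}^{RE}(t_k)$ is independent of $(\xi_j^{k+1})_j$ (the first part of the proof of Lemma \ref{lem_a_diff}) and using (H1), the term $R_k$ is, conditionally on the $\sigma$-field $\mathcal G_k$ generated by $\Delta W_0,\dots,\Delta W_{k-1}$ and by $\{\xi_j^i:i\leq k,\ j\geq1\}$, an average of $M$ i.i.d.\ centered summands; in particular $(R_k)_k$ is a martingale difference sequence with respect to $(\mathcal G_{k+1})_k$.

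The first step is the per-step bound $\|R_k\|_p\leq CM^{-1/2}$. From (H2) and (H3) one gets $\E\|H(\xi,x)-a(x)\|^p\leq C(1+\|x\|^p)$ uniformly in $x$; applying the conditional Marcinkiewicz--Zygmund inequality to the conditionally i.i.d.\ centered summands of $R_k$ and then taking expectations gives $\E\|R_k\|^p\leq CM^{-p/2}\big(1+\E\|X_{n,M}^{RE}(t_k)\|^p\big)$, and Lemma \ref{lemma_finite_val} removes the dependence on $n,M,k$. Next, iterating both schemes yields
\[
e_m=h\sum_{k=0}^{m-1}\big(a(X_{n,M}^{RE}(t_k))-a(X_n^E(t_k))\big)+h\sum_{k=0}^{m-1}R_k+\sum_{k=0}^{m-1}\big(b(X_{n,M}^{RE}(t_k))-b(X_n^E(t_k))\big)\Delta W_k ,
\]
and I would estimate the three terms in $L^p(\Omega)$: the drift term by H\"older together with the Lipschitz bound $\|a(x)-a(y)\|\leq\|L\|_1\|x-y\|$; the diffusion term by the Burkholder--Davis--Gundy inequality and (B0); and the Monte Carlo term either crudely by the triangle inequality, $h\sum_{k=0}^{m-1}\|R_k\|_p\leq TCM^{-1/2}$, or, exploiting the martingale structure, by BDG, which gives the sharper $C(nM)^{-1/2}$. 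After absorbing the terms involving $h\sum_{k<m}\E\|e_k\|^p$ via the discrete Gronwall lemma, one obtains $\|X_n^E(T)-X_{n,M}^{RE}(T)\|_p\leq CM^{-1/2}$ in general, and $\leq C(nM)^{-1/2}$ when $b\equiv0$ (or whenever the martingale refinement is used).

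It then remains to specialize. For (E4) one adds the classical Euler--Maruyama bound $\|X(T)-X_n^E(T)\|_p\leq Cn^{-1/2}$ (Theorem 10.2.2 in \cite{kloeden}). For (E1), since $b\equiv0$ the equation reduces to the deterministic ODE $x'=a(x)$, $x(0)=\eta$, with $a$ Lipschitz and of linear growth, hence its solution is $C^{1,1}$ on $[0,T]$ and the deterministic Euler method has error of order $n^{-1}$; combining with the Monte Carlo term gives $C(n^{-1}+M^{-1/2})$. For (E3), $H$ depends only on $x$, hence $H(\xi,\cdot)=a(\cdot)$ almost surely, the randomized scheme coincides with the exact Euler scheme, and only the order-$n^{-1}$ ODE discretization error survives. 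For (E2), $H$ depends only on $t$, so $a\equiv a_0:=\E H(\xi,0)$ is constant, whence $X_n^E(T)=\eta+Ta_0=X(T)$ exactly and
\[
X_{n,M}^{RE}(T)-X(T)=\frac{h}{M}\sum_{k=0}^{n-1}\sum_{j=1}^{M}\big(H(\xi_j^{k+1},0)-a_0\big)
\]
is a normalized sum of $nM$ i.i.d.\ centered terms with uniformly bounded $p$-th moments (by (H2)), so the Marcinkiewicz--Zygmund inequality directly yields $\|X_{n,M}^{RE}(T)-X(T)\|_p\leq C(nM)^{-1/2}$.

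The main obstacle I anticipate is the Monte Carlo step: one must check carefully that the $R_k$ genuinely form a martingale difference sequence relative to a filtration that also renders the increments $\Delta W_k$ tractable, and that the per-step estimate $\|R_k\|_p\leq CM^{-1/2}$ holds uniformly in $n$, $M$ and $k$ — which is precisely where the a priori moment bound of Lemma \ref{lemma_finite_val} enters. Everything else is either a standard discrete Gronwall/BDG computation or, in the structured cases (E2) and (E3), an exact algebraic simplification that removes one of the two error sources so that a single (Monte Carlo, respectively discretization) estimate suffices.
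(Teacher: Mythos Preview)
Your proposal is correct and matches the paper's approach: split via the triangle inequality into the standard Euler discretization error and the Monte Carlo perturbation $\|X_n^E(T)-X_{n,M}^{RE}(T)\|_p\leq CM^{-1/2}$, the latter obtained from the same drift/MC/diffusion decomposition, per-step remainder bound (the paper's Lemma~\ref{lem_a_diff}), and a Gronwall argument (the paper works with the continuous interpolations and continuous Gronwall in Lemma~\ref{lemma_error}, you use the discrete recursion and discrete Gronwall, but the computation is the same). Your explicit derivations for (E2) and (E3), and the BDG refinement on the martingale $h\sum_k R_k$ giving the sharper $(nM)^{-1/2}$, go slightly beyond the paper's one-line justifications for those cases but do not alter the route.
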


To prove Theorem \ref{thm_upper_bound}, we use the following lemma.

\begin{lemma}
\label{lemma_error}
There exists constant $C\in [0, +\infty)$ depending only on $\eta, T$ and parameters of the class $\mathcal{F}(L,K,p),$ such that for all $(H,b) \in \mathcal{F}(L,K,p)$ and $n,M \in \mathbb{N}$ the following inequality holds
\begin{equation*}
\| X_{n}^{E}(T) - X_{n,M}^{RE}(T) \|_{p} \leq C M^{-1/2}.
\end{equation*}
\end{lemma}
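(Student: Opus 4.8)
The plan is to compare the two schemes step by step, using the fact that both are driven by the same Wiener increments $\Delta W_k$, so the diffusion terms cancel in the difference except through the dependence on the previous values. Introduce the error process $e_k := X_n^E(t_k) - X_{n,M}^{RE}(t_k)$, with $e_0 = 0$. From the recursions \eqref{standard_scheme} and \eqref{main_scheme}, I would write
\begin{equation*}
e_{k+1} = e_k + h\Big(a(X_n^E(t_k)) - \frac{1}{M}\sum_{j=1}^{M} H(\xi_j^{k+1}, X_{n,M}^{RE}(t_k))\Big) + \big(b(X_n^E(t_k)) - b(X_{n,M}^{RE}(t_k))\big)\Delta W_k.
\end{equation*}
The key is to split the drift discrepancy into a \emph{bias-type} term
\begin{equation*}
a(X_{n,M}^{RE}(t_k)) - \frac{1}{M}\sum_{j=1}^{M} H(\xi_j^{k+1}, X_{n,M}^{RE}(t_k)) =: R_k,
\end{equation*}
which by \eqref{drift_form} is a centered average of i.i.d.\ summands \emph{conditionally on} $X_{n,M}^{RE}(t_k)$ (here I use the independence $\sigma(X_{n,M}^{RE}(t_k)) \perp\!\!\!\perp \sigma(\xi_1^{k+1},\ldots,\xi_M^{k+1})$ noted after \eqref{main_scheme}), plus a \emph{Lipschitz-type} term $a(X_n^E(t_k)) - a(X_{n,M}^{RE}(t_k))$, which is bounded by $\|L\|_1 \|e_k\|$ using (H3) and Jensen.

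Next I would take $L^p$ norms. For the martingale-difference part $b(X_n^E(t_k)) - b(X_{n,M}^{RE}(t_k)))\Delta W_k$, use the Burkholder–Davis–Gundy inequality (or a discrete $L^p$ bound for martingale increments) together with (B0) to get a contribution of order $\sqrt{h}\,\|e_k\|_p$. The genuinely new ingredient is controlling $\|R_k\|_p$: conditionally on $X_{n,M}^{RE}(t_k)$, $M R_k$ is a sum of $M$ i.i.d.\ centered $\mathbb{R}^d$-valued random variables, each with $p$-th moment controlled by (H2), (H3) and Lemma \ref{lemma_finite_val} (which gives $\sup_t \mathbb{E}\|\tilde X_{n,M}^{RE}(t)\|^p \le C$); by the Marcinkiewicz–Zygmund / Rosenthal inequality applied conditionally, $\mathbb{E}\big[\|R_k\|^p \mid X_{n,M}^{RE}(t_k)\big] \le C M^{-p/2}(1 + \|X_{n,M}^{RE}(t_k)\|^p)$, and taking expectations and using Lemma \ref{lemma_finite_val} yields $\|R_k\|_p \le C M^{-1/2}$. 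Crucially, because of the conditional-centering, the $R_k$ terms accumulated over $k$ behave like a martingale sum, so summing $n$ of them (each weighted by $h = T/n$) still gives a total of order $M^{-1/2}$ rather than $n M^{-1/2}$ — this requires a second BDG/martingale bound applied to $\sum_k h R_k$ rather than a naive triangle inequality.

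Putting this together, I would establish a recursive estimate of the form
\begin{equation*}
\max_{0 \le j \le k+1}\|e_j\|_p^p \le C h \sum_{i=0}^{k} \max_{0 \le j \le i}\|e_j\|_p^p + C M^{-p/2},
\end{equation*}
and conclude by the discrete Gronwall inequality that $\max_{0 \le k \le n}\|e_k\|_p \le C M^{-1/2}$, which in particular gives the claim at $t_n = T$. The main obstacle I anticipate is the bookkeeping needed to keep the $R_k$-contribution at order $M^{-1/2}$ uniformly in $n$: one must resist bounding $\|\sum_k h R_k\|_p$ termwise (which would cost an extra factor growing in $n$) and instead exploit that $(\sum_{i\le k} h R_i)_k$ is a martingale with respect to the filtration generated by the $\xi$'s and $W$, so that BDG gives $\big\|\sum_{i=0}^{n-1} h R_i\big\|_p \le C\big(\sum_i h^2 \|R_i\|_p^2\big)^{1/2} \le C (n h^2 M^{-1})^{1/2} = C T\, n^{-1/2} M^{-1/2} \le C M^{-1/2}$. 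Care is also needed because $e_k$ enters $R_k$ only through the *argument* $X_{n,M}^{RE}(t_k)$, not through extra randomness, so the conditional-centering structure is preserved at each step.
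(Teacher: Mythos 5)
Your proposal is correct and follows essentially the same route as the paper: decompose the one-step discrepancy into a Lipschitz term $a(X_n^E(t_k))-a(X_{n,M}^{RE}(t_k))$, a conditionally centered Monte Carlo bias $R_k$ (controlled via the independence of $\sigma(X_{n,M}^{RE}(t_k))$ from $\sigma(\xi_1^{k+1},\ldots,\xi_M^{k+1})$, a conditional moment inequality for sums of i.i.d.\ centered variables, and Lemma \ref{lemma_finite_val}), and a diffusion martingale handled by Burkholder, followed by Gronwall. The paper does exactly this, only phrased through the continuous interpolations $\tilde X_n^E$, $\tilde X_{n,M}^{RE}$ and the continuous Gronwall lemma, with your bound on $R_k$ isolated as Lemma \ref{lem_a_diff}. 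The one place where you diverge is your claimed ``main obstacle'': the worry that a termwise bound on $\sum_k h R_k$ costs a factor growing in $n$ is unfounded, since $\sum_{k=0}^{n-1} h = T$ and $\|R_k\|_p \le C M^{-1/2}$ uniformly in $k$ already give $\big\|\sum_k h R_k\big\|_p \le CT M^{-1/2}$; the paper indeed uses only H\"older in time (no martingale structure in $k$) for this term. Your additional BDG argument for $(\sum_{i\le k} h R_i)_k$ is valid and would yield the sharper contribution $O(n^{-1/2}M^{-1/2})$, but that refinement is not needed for the stated $O(M^{-1/2})$ bound (and would not improve the theorem, where other terms of order $n^{-1/2}$ or $M^{-1/2}$ dominate).
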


\begin{proof}
In this proof, we leverage a continuous versions of Euler schemes. 
First, note that
\begin{equation*}
\mathbb{E}\|\tilde{X}_{n}^{E}(t) - \tilde{X}_{n,M}^{RE}(t) \|^p \leq 2^{p-1}(\mathbb{E}\|A_n^M(t)\|^p + \mathbb{E}\| B_n^M(t)\|^p)
\end{equation*}
where
\begin{equation*}
\mathbb{E}\|A_n^M(t)\|^p  = \mathbb{E}\Big{\|} \int\limits_{0}^{t}\sum\limits_{k=0}^{n-1}\Big{(}\frac{1}{M}\sum\limits_{j=1}^{M}H(\xi_j^{k+1}, \tilde{X}_{n,M}^{RE}(t_k)) - a(\tilde{X}_n^E(t_k))\Big{)}\mathds{1}_{(t_k, t_{k+1}]}(s)\rd s \Big{\|}^p,
\end{equation*}
and
\begin{equation*}
 \mathbb{E}\|B_n^M(t)\|^p = \mathbb{E}\Big{\|} \int\limits_{0}^{t}\sum\limits_{k=0}^{n-1}\Big{(}b(\tilde{X}_{n,M}^{RE}(t_k)) - b(\tilde{X}_{n}^{E}(t_k))\Big{)}\mathds{1}_{(t_k, t_{k+1}]}(s)\rd W(s) \Big{\|}^p.
\end{equation*}
From H\"older's inequality, the fact that intervals $(t_k, t_{k+1}]$ are disjoint for all $k=0,\ldots,n-1,$ equidistant mesh, and finally Fubini's theorem, we obtain that
\begin{equation*}
\begin{split}
\mathbb{E}\|A_n^M(t)\|^p & \leq T^{p-1} \sum\limits_{k=0}^{n-1} \int\limits_{0}^{t}\mathbb{E}\Big{\|} \frac{1}{M}\sum\limits_{j=1}^{M}H(\xi_j^{k+1}, \tilde{X}_{n,M}^{RE}(t_k)) - a(\tilde{X}_n^E(t_k))\Big{\|}^p\mathds{1}_{(t_k, t_{k+1}]}(s)\rd s \\
& \leq 2^{p-1}T^{p-1}\Big{(}A_{1,n}^{M}(t) + A_{2,n}^{M}(t)\Big{)}
\end{split}
\end{equation*}
where
\begin{equation*}
A_{1,n}^{M}(t) = \sum\limits_{k=0}^{n-1} \int\limits_{0}^{t}\mathbb{E}\Big{\|} a(\tilde{X}_n^{RE}(t_k)) - a(\tilde{X}_n^E(t_k))\Big{\|}^p\mathds{1}_{(t_k, t_{k+1}]}(s)\rd s 
\end{equation*}
and
\begin{equation*}
A_{2,n}^{M}(t) = \sum\limits_{k=0}^{n-1} \int\limits_{0}^{t}\mathbb{E}\Big{\|} \frac{1}{M}\sum\limits_{j=1}^{M}H(\xi_j^{k+1}, \tilde{X}_{n,M}^{RE}(t_k)) - a(\tilde{X}_n^{RE}(t_k))\Big{\|}^p\mathds{1}_{(t_k, t_{k+1}]}(s)\rd s .
\end{equation*}
Note that
\begin{equation*}
\begin{split}
A_{1,n}^{M}(t) 
& \leq \| L\|_{1}^{p} \sum\limits_{k=0}^{n-1} \int\limits_{0}^{t}\mathbb{E}\Big{\|} \tilde{X}_{n,M}^{RE}(t_k) - \tilde{X}_n^E(t_k)\Big{\|}^p\mathds{1}_{(t_k, t_{k+1}]}(s)\rd s \\
& \leq \| L\|_{1}^{p} \int\limits_{0}^{t}\sup\limits_{0\leq u \leq s}\mathbb{E}\Big{\|} \tilde{X}_{n,M}^{RE}(u) - \tilde{X}_n^E(u)\Big{\|}^p\rd s
\end{split}
\end{equation*}
and
\begin{equation*}
\begin{split}
A_{2,n}^{M}(t) 
& \leq \sum\limits_{k=0}^{n-1} \int\limits_{0}^{t}\tilde{C}_{p}^p 2^{p-1}\Big{(}1 + \max\limits_{0 \leq i \leq n} \|X_{n,M}^{RE}(t_i) \|_{p}^{p} \Big{)}M^{-p/2}\mathds{1}_{(t_k, t_{k+1}]}(s)\rd s \\
& \leq \tilde{K}_{p} M^{-p/2}
\end{split}
\end{equation*}
which results from lemma \ref{lemma_finite_val} and \ref{lem_a_diff}. Similarly, we obtain from H\"older's inequality and Burkholder inequality that
\begin{equation*}
\begin{split}
\mathbb{E}\|B_n^M(t)\|^p & 
\leq C_p \mathbb{E}\Big{(}\int\limits_{0}^{t}\sum_{k=0}^{n-1}\|b(\tilde{X}_{n,M}^{RE}(t_k)) - b(\tilde{X}_{n}^{E}(t_k))\|^{2}\mathds{1}_{(t_k, t_{k+1}]}(s)\rd s \Big{)}^{p/2}\\
& \leq C_p T^{p/2 - 1}\int\limits_{0}^{t}\sum_{k=0}^{n-1}\mathbb{E}\Big{\|}b(\tilde{X}_{n,M}^{RE}(t_k)) - b(\tilde{X}_{n}^{E}(t_k))\Big{\|}^{p}\mathds{1}_{(t_k, t_{k+1}]}(s)\rd s \\
& \leq
C_p T^{p/2-1} K^{p} \int\limits_{0}^{t}\sup\limits_{0\leq u \leq s}\mathbb{E}\Big{\|} \tilde{X}_{n,M}^{RE}(u) - \tilde{X}_{n}^{E}(u)\Big{\|}^{p}\rd s. \\
\end{split}
\end{equation*}
Henceforth,
\begin{equation*}
\begin{split}
\mathbb{E}\|\tilde{X}_{n}^{E}(t) - \tilde{X}_{n,M}^{RE}(t)\|^p \leq 2^{p-1}\Big{(} & 2^{p-1}T^{p-1}\tilde{K}_{p}M^{-p/2} + \\
& \max\big{\{}T^{p-1}2^{p-1}\|L\|_{1}^p, C_p T^{p/2-1}K^{p}\big{\}}\int\limits_{0}^{t}\sup\limits_{0\leq u \leq s}\mathbb{E}\|\tilde{X}_{n,M}^{RE}(u) - \tilde{X}_{n}^{E}(u)\|^{p} \rd s \Big{)}.
\end{split}
\end{equation*}
Since function function $[0, T]\ni t \mapsto \sup\limits_{0\leq u\leq t} \mathbb{E}\| \tilde{X}_{n,M}^{RE}(u) - \tilde{X}_{n}^{E}(u)\|^p$
is Borel-measurable (as a non-decreasing function) and bounded (by fact \ref{fact_finite_alg}),
applying the Gronwall's
lemma yields
\begin{equation*}
\mathbb{E}\|\tilde{X}_{n}^{E}(t) - \tilde{X}_{n,M}^{RE}(t)\|^p \leq C M^{-p/2}
\end{equation*}
for some $C \in [0, +\infty)$ which completes the proof.
\end{proof}
{\noindent\bf Proof of Theorem \ref{thm_upper_bound}. } 
We provide a proof only for the general case (E4). Inequality (E1) can be proved in a similar manner. On the other hand, (E2) follows immediately from the approximation error of expectation with the Monte Carlo sum. Similarly (E3) follows from the approximation error for the standard Euler algorithm for ODEs. 

From Minkowski inequality, Theorem 10.2.2 in \cite{kloeden} and lemma \eqref{lemma_error}, one obtains that
\begin{equation*}
\begin{split}
\|X(T) - X_{n,M}^{RE}(T)\|_p & \leq \|X(T) - X_{n}^{E}(T)\|_p + \|X_{n}^{E}(T) - X_{n,M}^{RE}(T)\|_p. \\
& \leq C_{1}n^{-1/2} + C_{2}M^{-1/2} \\
& \leq \max\{C_1, C_2\}(n^{-1/2} + M^{-1/2}),
\end{split}
\end{equation*}
which completes the proof for inequality (E4).

\section{Numerical experiments}
\label{sec:numer}
This section compares the obtained theoretical results with the outputs of performed simulations. 
In subsection \ref{numer:err_vs_cost}, we focus on validating the main results from theorem \eqref{thm_upper_bound}. We estimate $L^{2}(\Omega)$ error of the randomized Euler algorithm and compare it with its informational cost.
In subsection \ref{numer:optim}, we inspect randomized Euler algorithm trajectories in the optimization problem of finding local minima. As a byproduct, we show that the method can be used to search for the local minima. We decided to search for the local minima of functions of two variables since such functions can be plotted conveniently.
Next, in subsection \ref{sec:comparison}, we provide a detailed comparison with other optimization algorithms (optimizers), including Nesterov Accelerated Gradient (NAG), Adaptive Gradient (AdaGrad) and Adaptive Moment Estimation (ADAM).
Finally, in subsection \ref{numer:impl}, we provide implementation details in CUDA C. We perform a Monte Carlo simulation with independent samples from the Euler algorithm to estimate the $L^{2}(\Omega)$-error. Hence, to speed up the calculations we leverage CUDA C and parallelize the generation of random samples by running the randomized Euler algorithm on \texttt{Nvidia Titan V (VG100)} GPU under CUDA 10.2. The remaining part of the computations is performed on a single CPU core of \texttt{Intel(R) Xeon(R) CPU E5-2650 v4 @ 2.20GHz}. The implementation utilizes $100$ CUDA blocks and $32$ threads per block.

\subsection{Upper error bound vs informational cost}
\label{numer:err_vs_cost}
First, we studied a system of SDEs with two equations driven by a two-dimensional Wiener process including
$$H(t, x_1, x_2) = (0.4 t^{2} \sin(x_2), 0.8 t^{2} \sin(x_1))^{T}, \ \mbox{with} \ \xi \sim \mathcal{N}(0,1)$$
and
$$b(x_1,x_2)=
\begin{pmatrix}
0.16 x_1 & 0.24 x_2\\ 
0.24 x_1 & 0.32 x_2\\ 
\end{pmatrix}.
$$
The initial condition was $\eta = (1, 1)^{T}.$
We took an estimator of the error $\|X(T)-X_{n, M}^{RE}(T)\|_{2}$ as
\begin{equation}
\label{est_err_1}
	\varepsilon_K( X_{n,M}^{RE}(T)):=\Biggl(\frac{1}{K}\sum_{j=1}^K \|X^{RE, (j)}_{n, M}(T)- X^{RE, (j)}_{100n, 100M}(T)\|^2\Biggr)^{1/2}.
\end{equation}
The figure \ref{fig:ex1} shows results obtained via numerical experiments for $\varepsilon_{6400}( X^{RE}_{n, n}(1))$ and various values of $n.$ Note that, according to the theoretical findings from before, the informational cost of the algorithm is of order $\cost( X^{RE}_{n, n}(1)) = \Theta(n^{2})$, and its $L^{2}(\Omega)$-error is of order $\|X(1) - X^{RE}_{n, n}(1)\|_2 = \bigo(n^{-1/2})$. Hence, $\cost( X^{RE}_{n, n}(1)) = \bigo(\| X(1) - X_{n, n}^{RE}(1)\|_{2}^{-4})$. Similarly, in numerical experiments, we obtain a strong linear correlation between estimated error and cost. It may suggest that in this case, the theoretical findings on the relation between error and cost might be improved and asymptotically equal in terms of $\Theta$ rather than $\bigo$. Nonetheless, this cannot be addressed without any theoretical findings on error lower bounds which we leave as an open question for future research. Finally, it is worth noting that the slope coefficient is equal to $-3.82$ which is also seemingly close to $-4.$ 
\begin{figure*}[h!]
    \centering
    \includegraphics[width=1.0\textwidth]{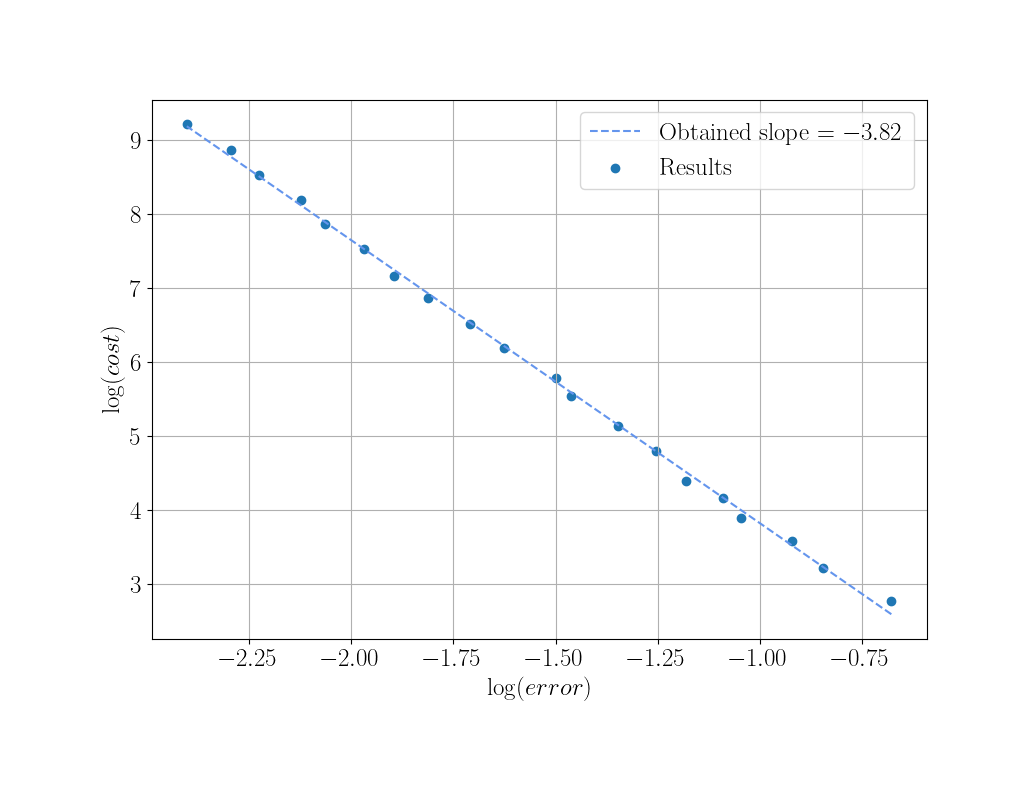}
    \caption{error vs cost}
    \label{fig:ex1}
\end{figure*}

\subsection{Optimization with randomized Euler algorithm}
\label{numer:optim}
In the remaining part of numerical experiments, we inspected trajectories of $X_{100, M}^{RE}(1)$ with respect to $M$ for two optimization problems. 

For the first optimization problem, we took the parabolic loss function 
\begin{equation*}
g(\xi_1, \xi_2, x_1, x_2) = (x_1 - \xi_1)^2 + (x_2 - \xi_2)^2
\end{equation*}
with two independent zero-mean normally distributed random variables $\xi_1, \xi_2.$ Note that 
\begin{equation*}
    f(x_1, x_2) = \mathbb{E}(g(\xi_1, \xi_2, x_1, x_2)) = x_1^2 + x_2^2 + \mathbb{E}(\xi_1^2) + \mathbb{E}(\xi_1^2),
\end{equation*}
and $H(\xi_1, \xi_2, x_1, x_2) = \nabla g(\xi_1, \xi_2, x_1, x_2) = 2(x_1 - \xi_1, x_2 - \xi_2)^{T}.$ Finally, four different trajectories were generated for all initial values $x_0 \in \{-3,3\}\times \{-3, 3\}, b(x_1, x_2) = 0.6 I_{2}$ and $M\in \{1, 5, 10, 100\}$ respectively. In figure \ref{fig:ex_2}, one can notice that the trajectory path stabilizes and starts to resemble the actual gradient descent as $M$ increases.

For the second optimization problem, we introduced a stochastic modification of Himmelblau's function that is of the form
\begin{equation*}
g(\xi_1, \xi_2, x_1, x_2) = (x_1 ^2 + x_2 + \xi_1)^2 + (x_1 + x_2^2 + \xi_2)^2,
\end{equation*}
where $\xi_1 \sim \mathcal{N}(-11, 16), \xi_2 \sim \mathcal{N}(-7, 16)$ and $\xi_1, \xi_2$ are independent. 
Similarly, note that 
\begin{equation*}
    f(x_1, x_2) = \mathbb{E}(g(\xi_1, \xi_2, x_1, x_2)) = (x_1^2 + x_2 - 11)^2 + (x_1 + x_2^2 - 7)^2 + \mathbb{E}(\xi_1^2) + \mathbb{E}(\xi_1^2) - 170,
\end{equation*}
and $H(\xi_1, \xi_2, x_1, x_2) = \nabla g(\xi_1, \xi_2, x_1, x_2).$
We generated two sets of trajectories for $b(x_1, x_2)=0_{2\times2}$ and $M\in \{1, 100\}$ respectively, where each trajectory starts at one of the initial points $x_0 \in \{-1, 1\} \times \{-1, 1\}.$ For distinction, the trajectories generated with $M=100,$ are plotted in red. It turns out, that their final locations coincide with local minima up to three decimal places. In the figure \ref{fig:ex_2_2}, one can also refer to the locations of the final trajectory points for $M=1,$ which are denoted as $x_{n}^{(i)}$ for $i \in \{1,2,3,4\}.$ Note that in contrary to $M=100,$ for $M=1,$ gradient descent trajectories struggle to find respective local minima. 

\clearpage

\begin{figure*}[h!]
    \centering
    \begin{subfigure}[t]{0.5\textwidth}
        \centering
        \includegraphics[width=1.1\textwidth]{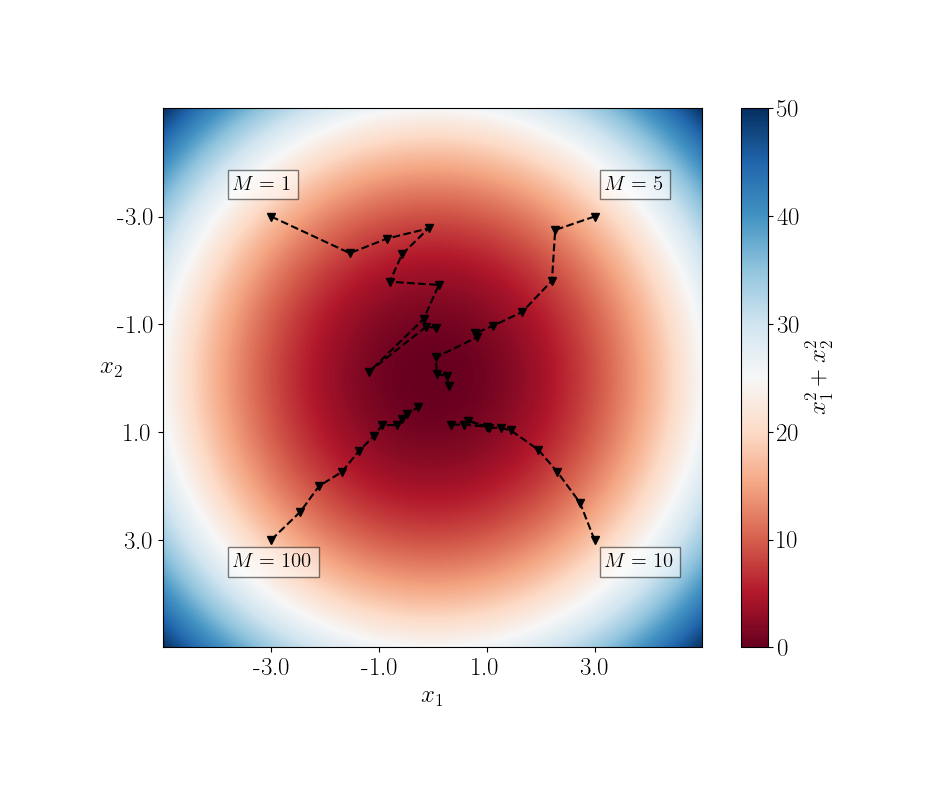}
        \caption{Parabolic loss function}
        \label{fig:ex_2}
    \end{subfigure}%
    ~ 
    \begin{subfigure}[t]{0.5\textwidth}
        \centering
        \includegraphics[width=1.1\textwidth]{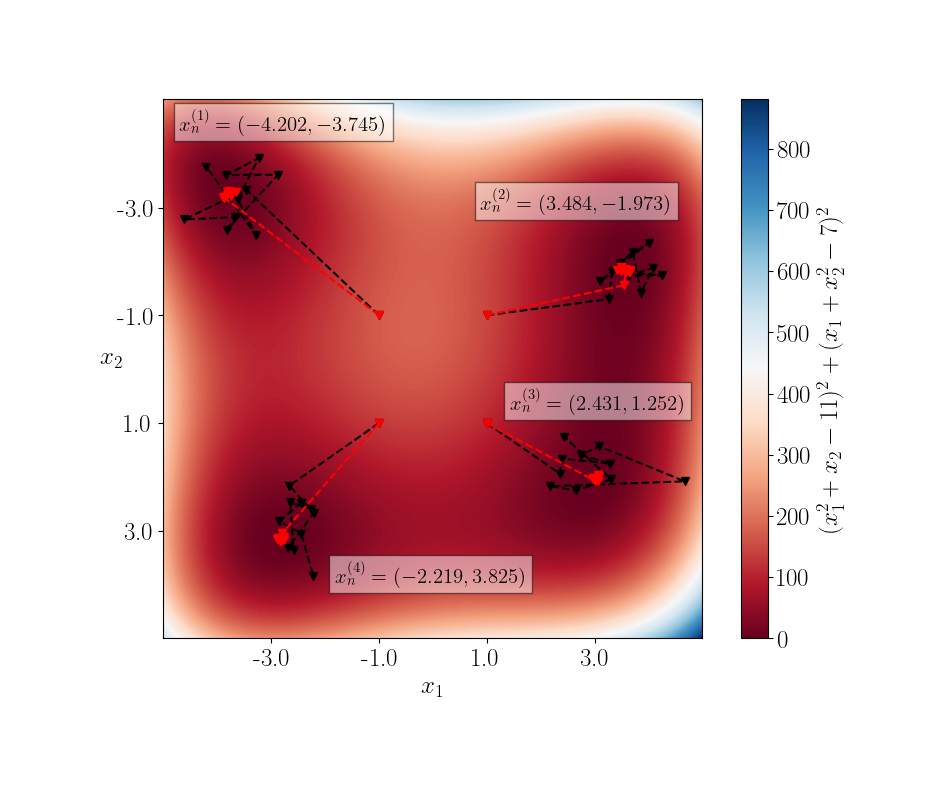}
        \caption{Modified Himmelblau's loss function}
        \label{fig:ex_2_2}
    \end{subfigure}
    \caption{Trajectories searching local minima.}
\end{figure*}

\subsection{Comparison with other optimization methods}
\label{sec:comparison}

In this subsection, we consider $5$ different optimization algorithms/methods, also known as optimizers. In particular, SGD and PSGD optimizers are implemented in line with the Euler scheme definition from section \ref{sec:err}, where PSGD relies on $b(x_1, x_2) = 0.3 I_{2}$.
From the previous subsection, we know that the number of samples can make a real impact on the cost function formula, and henceforth, the optimization method's trajectory. To have reliable comparison results i.e., undistrupted by the impact of the number of samples, we only consider the following deterministic cost functions to be optimized: 
\begin{enumerate}
    \item[(i)]paraboloid function
    \begin{equation*}
        f(x_1, x_2) = x_1^2 + x_2^2,
    \end{equation*}
    \item[(ii)]
    Himmelblau's function
    \begin{equation*}
    f(x_1, x_2) = (x_1^2 + x_2 -11)^2 + (x_1 + x_2^2-7)^2,
    \end{equation*}
    \item[(iii)]
    sigmoid-activation loss function
    \begin{equation*}
    f(x_1, x_2) = [0.5 - (1+\exp(-(x_1+x_2)))^{-1}]^{2}.
    \end{equation*}
\end{enumerate}
\begin{figure*}[h!]
    \centering
    \includegraphics[height=0.5\textwidth]{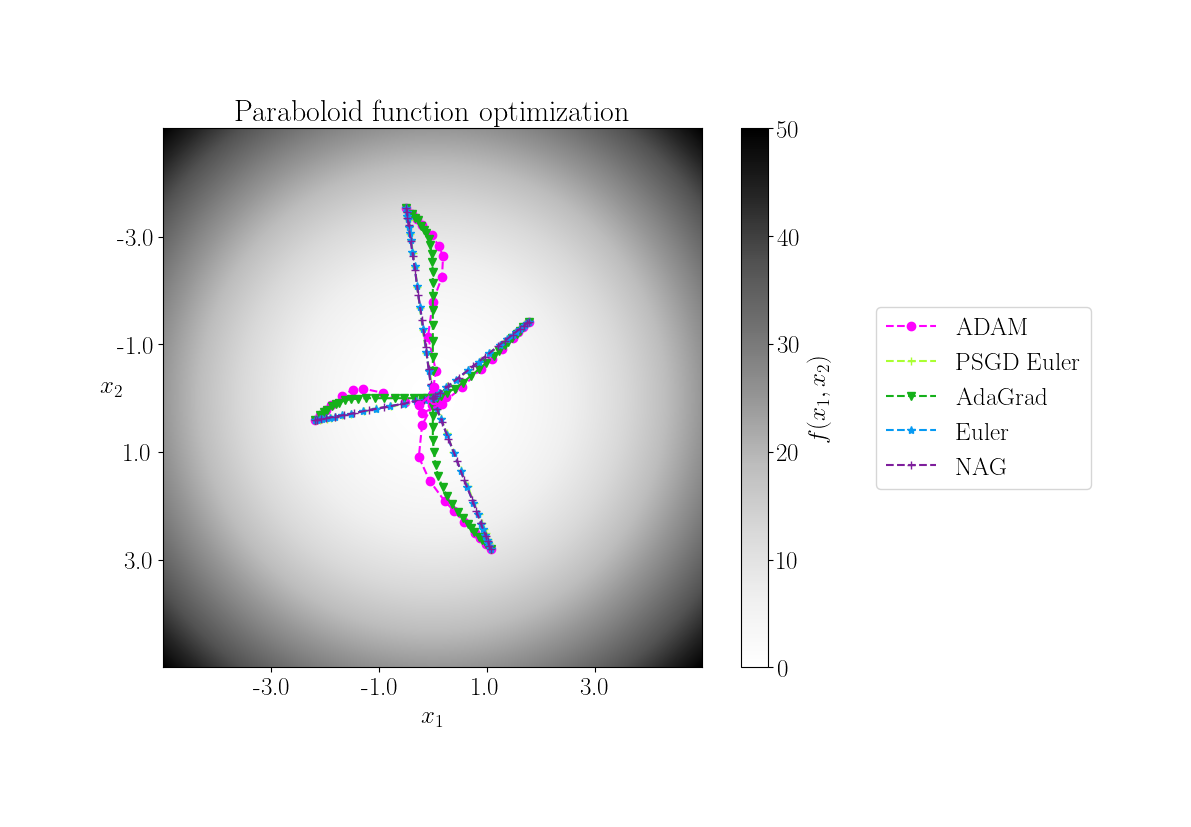}
    \caption{Trajectories of various optimizers searching local minima.}
    \label{fig:fig4}
\end{figure*}
For instance, in figures \ref{fig:fig4}-\ref{fig:fig6}, we can find paths of various optimizers for given optimization problems where all the optimizers start at $4$ common but randomly selected initial points.
Next, we examine optimization errors, which are measured in terms of distance from current point to its closest local minimum at every single iteration. To have a better insight into paths presented in the previous figures, in figures \ref{fig:fig7}-\ref{fig:fig9}, we can find a comparison of errors for one of the previously selected initial points. 
Next, in numerical experiments, we took the following quantitative approach. For each optimization problem, we generated $1000$ random initial values where the distance from each value to its closest local minimum varied between $1.5$ and $3$.
Then mean errors at the log-scale were computed for each optimizer respectively, which can be found in figures \ref{fig:fig10}-\ref{fig:fig12}.
Finally, based on such values, we also computed the quantile spread of log-scale errors for the optimization algorithms that can be found in figures \ref{fig:fig13}-\ref{fig:fig15}.
From the obtained results, we see that the PSGD algorithm works well. Nevertheless, due to its random corrections (in order to avoid plateaus), it is not fully able to find local minima, and henceforth it gets stuck at a certain error level, which can be easily beaten by adaptive methods like ADAM or NAG. In the end, we suspect that a better-designed diffusion function $b$ may lead to finer results, which we leave as an open question.

\begin{figure*}[h!]
    \centering
    \includegraphics[height=0.5\textwidth]{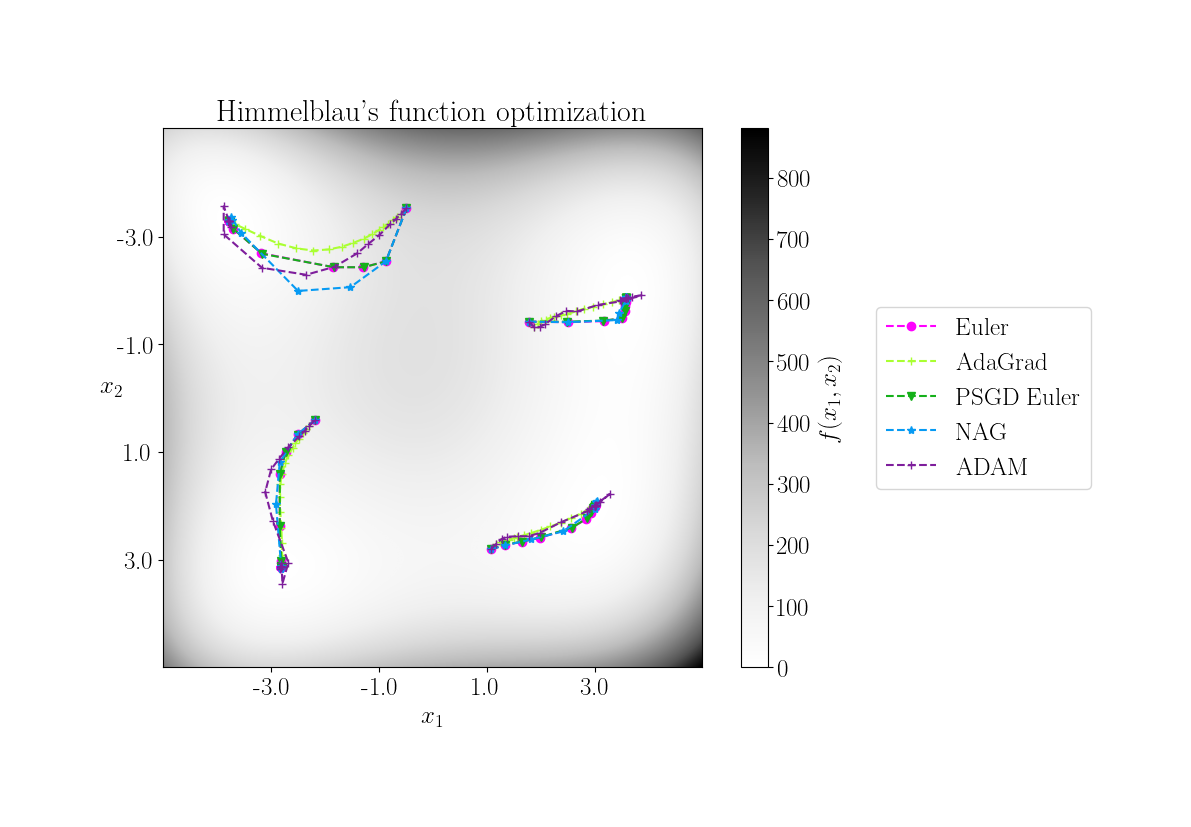}
    \caption{Trajectories of various optimizers searching local minima.}
    \label{fig:fig5}
\end{figure*}

\begin{figure*}[h!]
    \centering
    \includegraphics[height=0.5\textwidth]{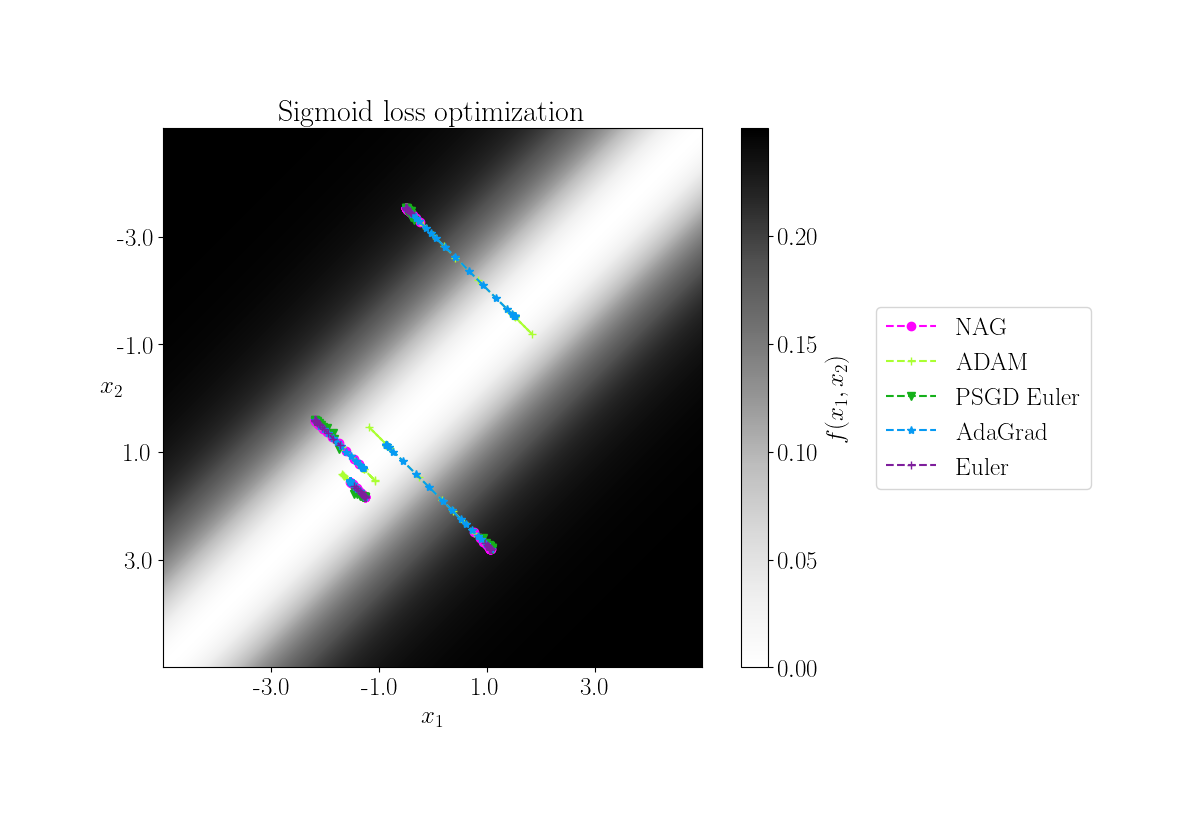}
    \caption{Trajectories of various optimizers searching local minima.}
    \label{fig:fig6}
\end{figure*}

\begin{figure*}[h!]
    \centering
    \includegraphics[height=0.5\textwidth]{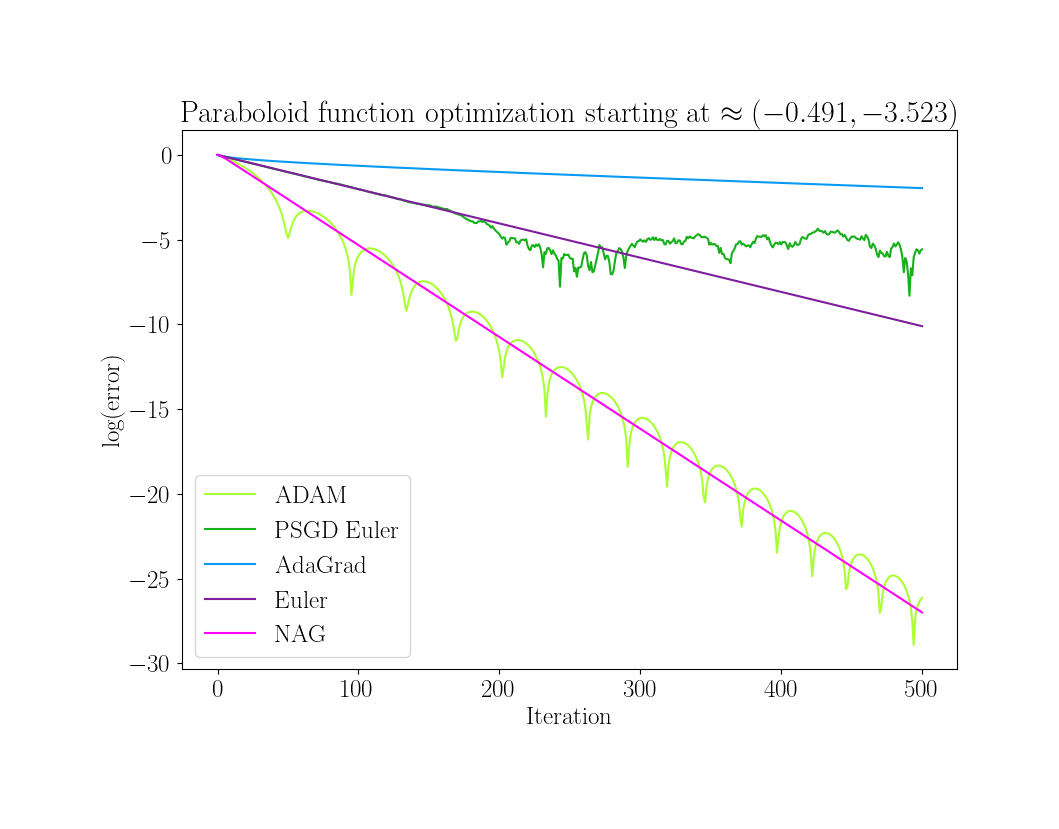}
    \caption{Errors of various optimizers starting at the selected point.}
    \label{fig:fig7}
\end{figure*}

\begin{figure*}[h!]
    \centering
    \includegraphics[height=0.5\textwidth]{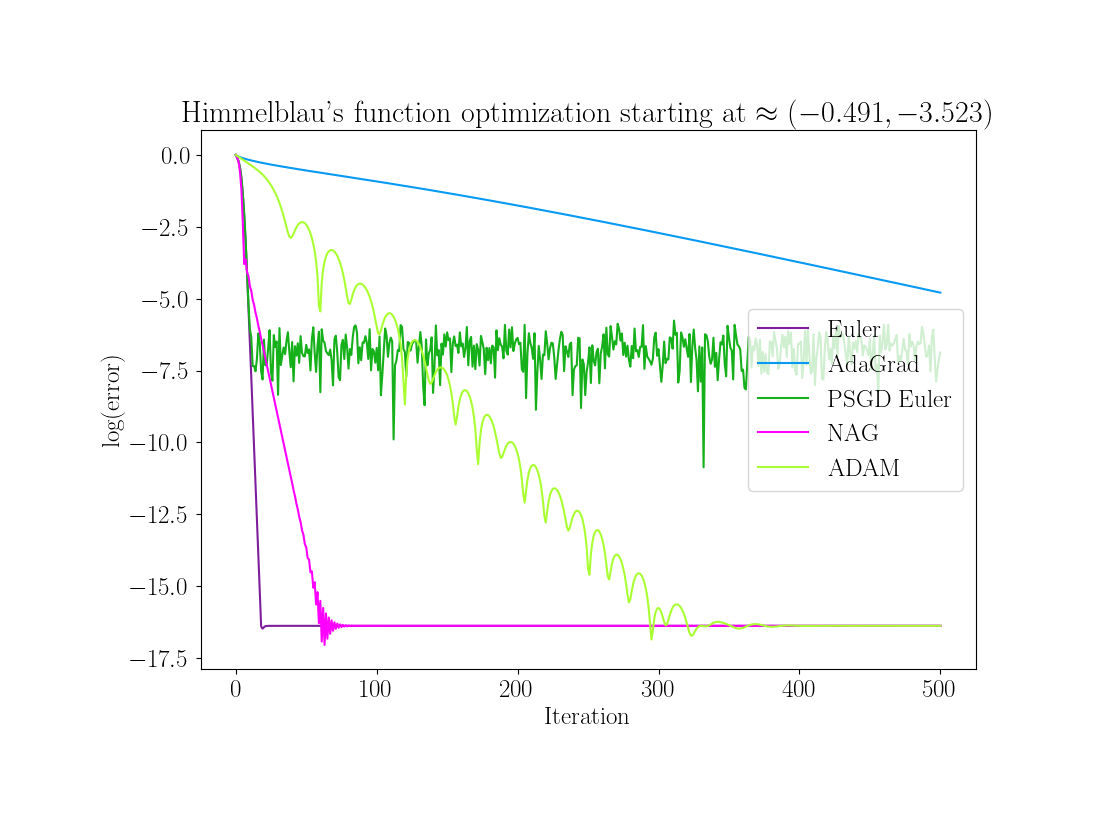}
    \caption{Errors of various optimizers starting at the selected point.}
    \label{fig:fig8}
\end{figure*}

\begin{figure*}[h!]
    \centering
    \includegraphics[height=0.5\textwidth]{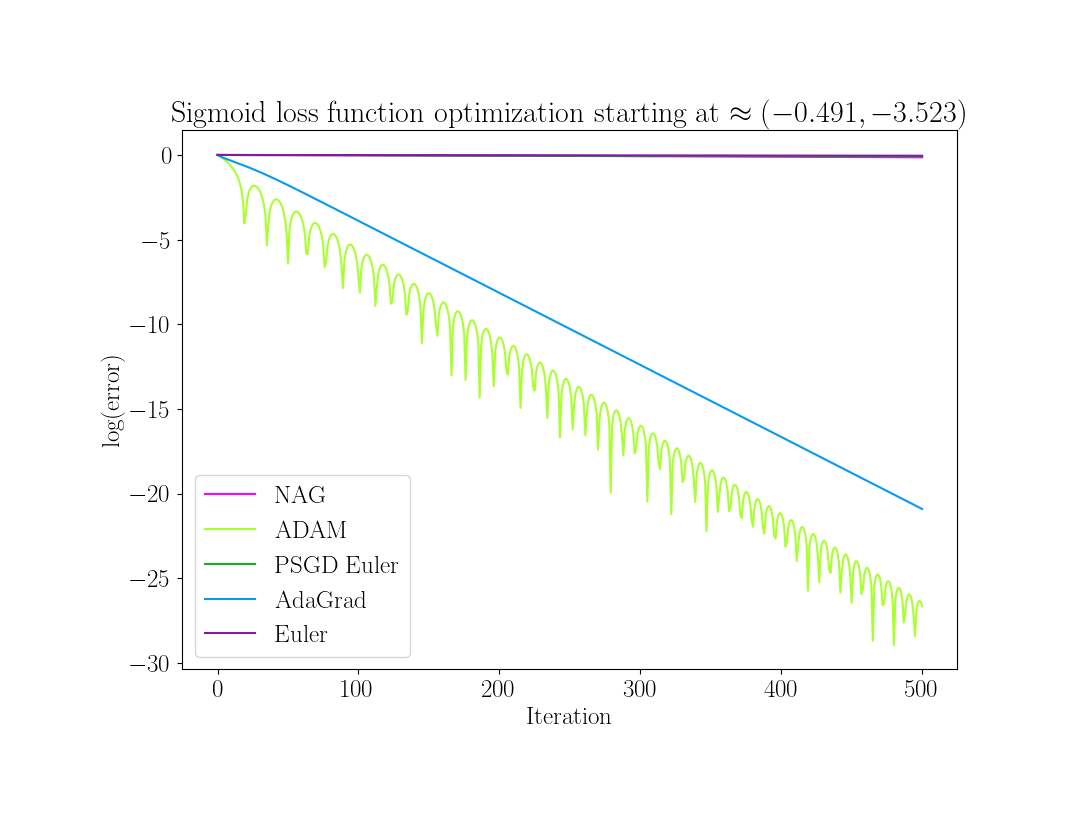}
    \caption{Errors of various optimizers starting at the selected point.}
    \label{fig:fig9}
\end{figure*}

\begin{figure*}[h!]
    \centering
    \includegraphics[height=0.5\textwidth]{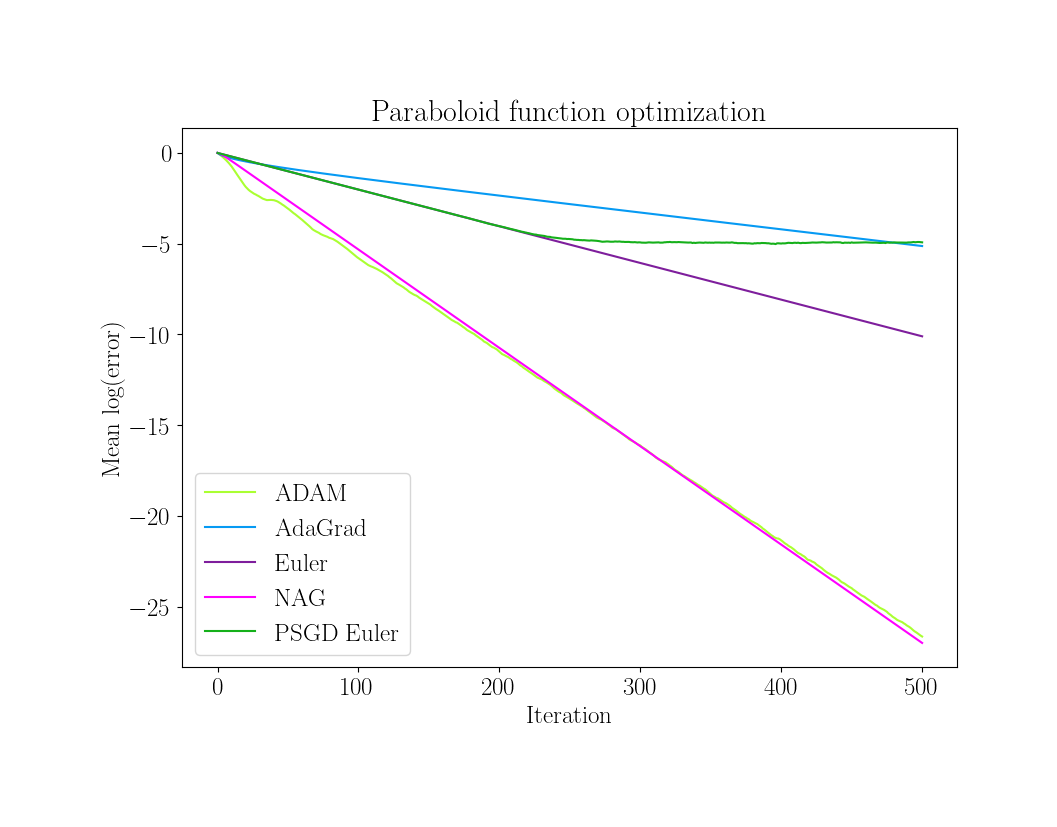}
    \caption{Mean errors of various optimizers starting at randomly selected points.}
    \label{fig:fig10}
\end{figure*}

\begin{figure*}[h!]
    \centering
    \includegraphics[height=0.5\textwidth]{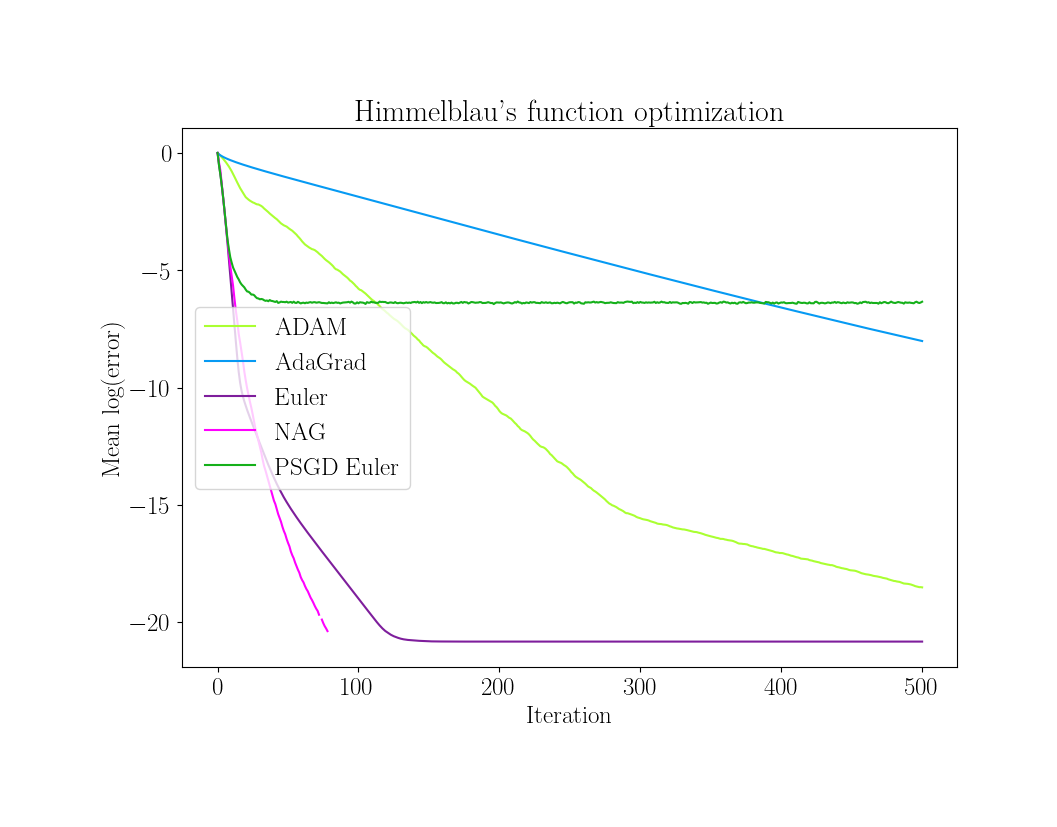}
    \caption{Mean errors of various optimizers starting at randomly selected points.}
    \label{fig:fig11}
\end{figure*}

\begin{figure*}[h!]
    \centering
    \includegraphics[height=0.5\textwidth]{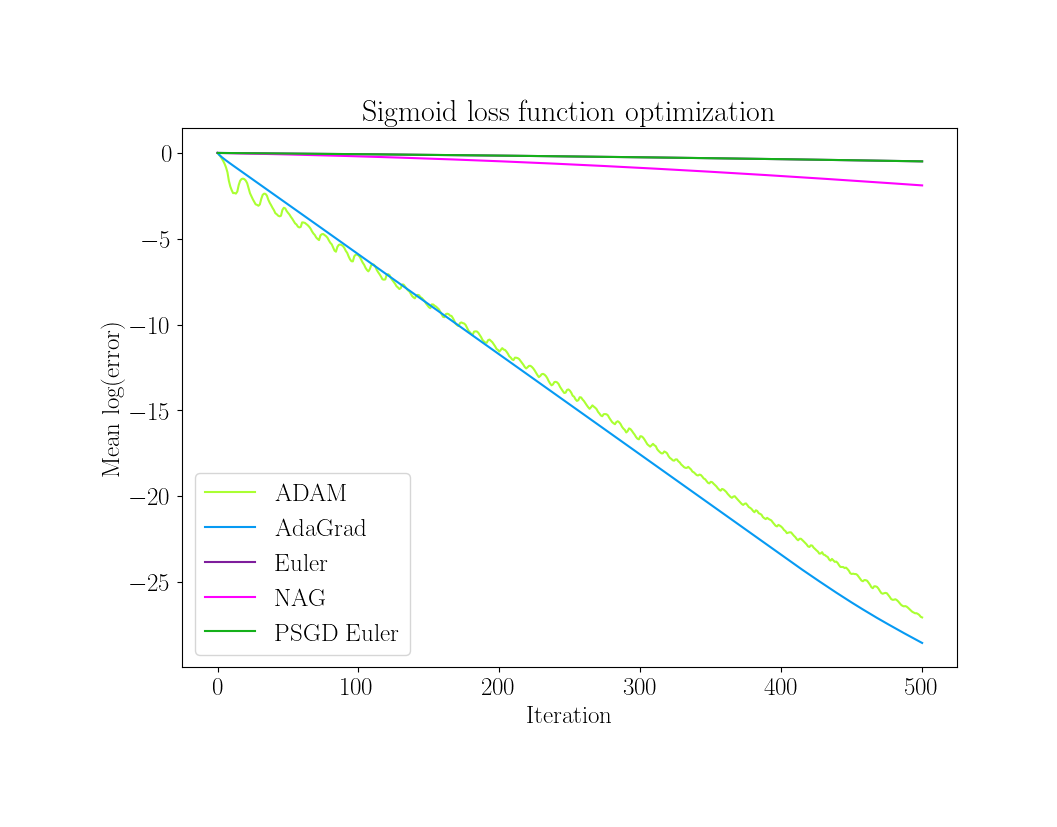}
    \caption{Mean errors of various optimizers starting at randomly selected points.}
    \label{fig:fig12}
\end{figure*}

\begin{figure*}[h!]
    \centering
    \includegraphics[height=0.5\textwidth]{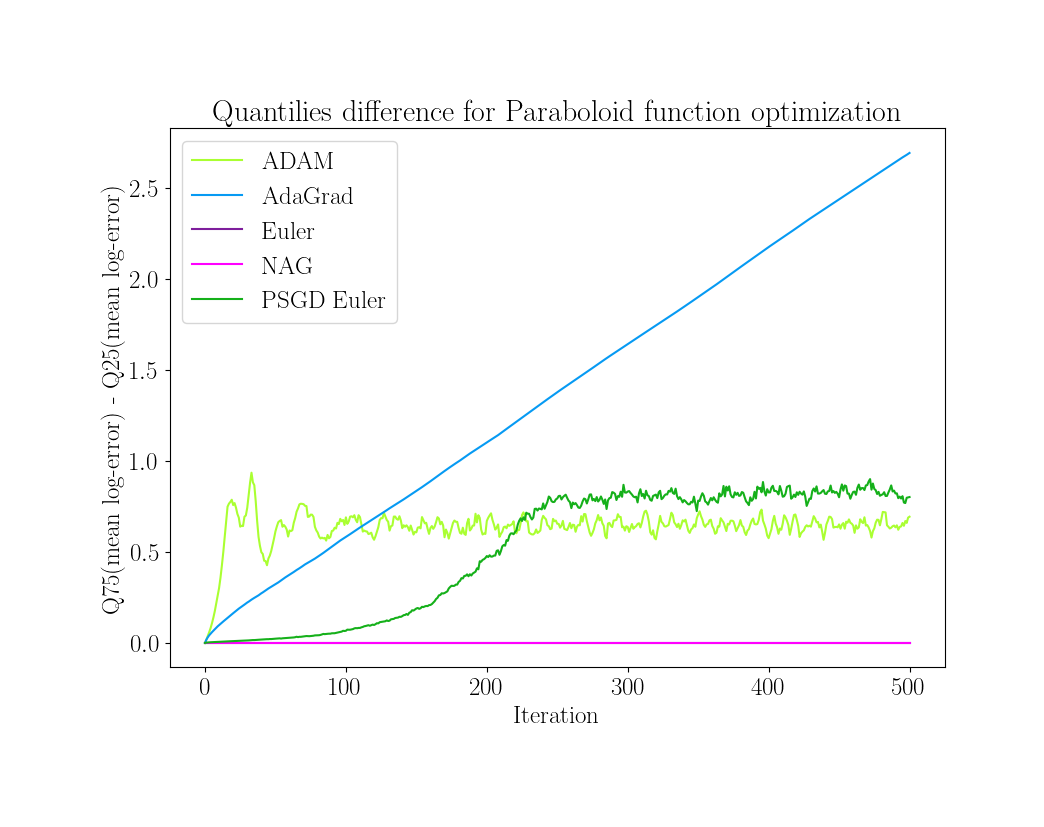}
    \caption{Quantile spread for errors of various optimizers starting at randomly selected points.}
    \label{fig:fig13}
\end{figure*}

\begin{figure*}[h!]
    \centering
    \includegraphics[height=0.5\textwidth]{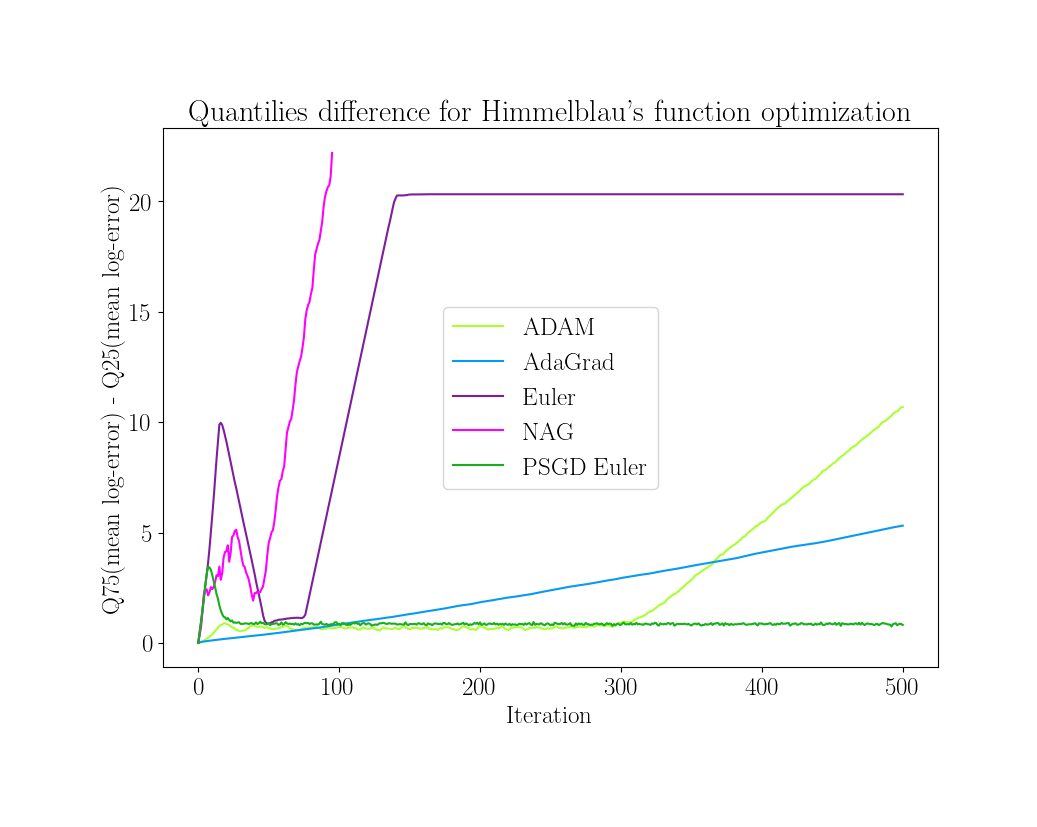}
    \caption{Quantile spread for errors of various optimizers starting at randomly selected points.}
    \label{fig:fig14}
\end{figure*}

\begin{figure*}[h!]
    \centering
    \includegraphics[height=0.5\textwidth]{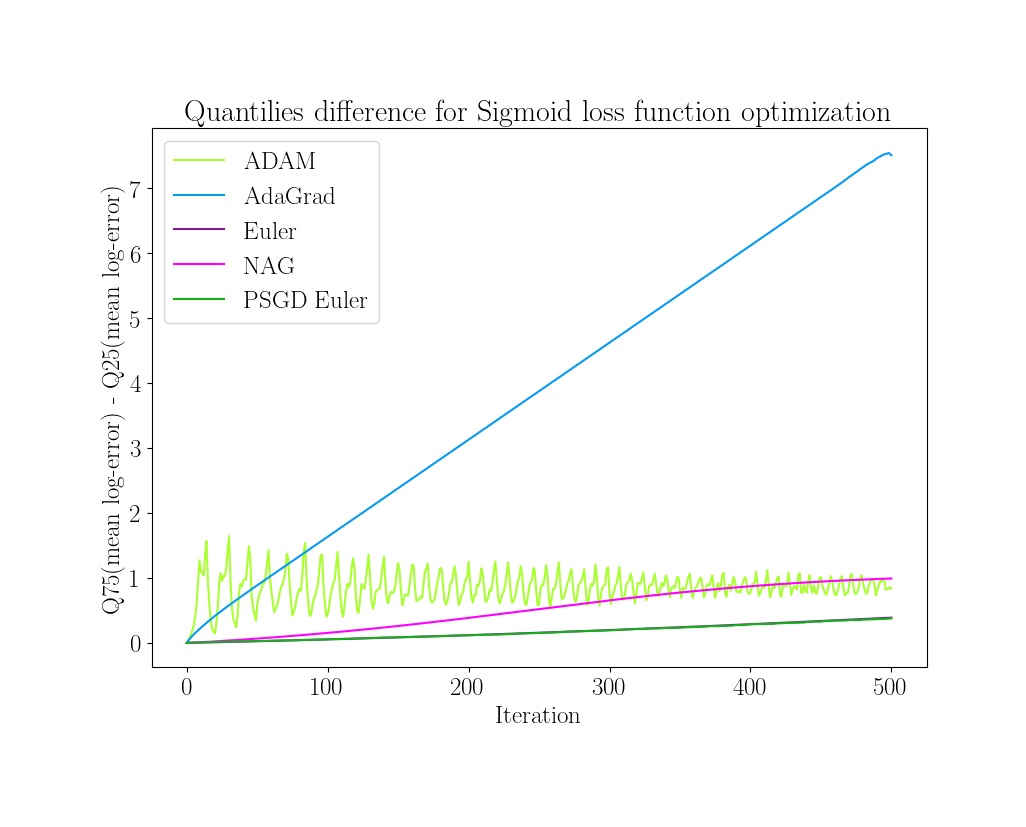}
    \caption{Quantile spread for errors of various optimizers starting at randomly selected points.}
    \label{fig:fig15}
\end{figure*}

\clearpage

\subsection{Implementation details}
\label{numer:impl}

In this subsection, we provide implementation details in CUDA C.


\lstinputlisting[language=C++, caption=Numerical scheme implementation, basicstyle=\ttfamily\tiny, label=eulercode]
{listing_1.txt}
\begin{enumerate}
    \item [(1)] Traverse through system of $d$ equations.
    \item [(2)] Add drift component.
    \item [(3)] Add diffusion component.
\end{enumerate}

\lstinputlisting[language=C++, caption=Sampling function, basicstyle=\ttfamily\tiny, label=ylcode]
{listing_2.txt}
\begin{enumerate}
    \item [(1)] Initialize sparse and dense grid Wiener increments, and randomization.
    \item [(2)] Initialize temporary variables for sparse and dense grid trajectories.
    \item [(3)] Get the least common multiple of grid densities.
    \item [(4)] Check whether the trajectory value can be updated.
    \item [(5)] Update Wiener increments.
    \item [(6)] Skip if neither dense nor sparse trajectory can be updated.
    \item [(7)] Generate drift randomization.
    \item [(8)] Update dense grid trajectory value.
    \item [(9)] Update sparse grid trajectory value.
\end{enumerate}

\section{Conclusions}
\label{sec:conc}
We have investigated the error of the randomized Euler scheme in the case when the drift of underlying SDE has a special integral form. As a result, we proved a theorem that describes the error upper bound of the algorithm in terms of the algorithm parameters i.e., grid density and the number of summands in the approximation of the drift. It turns out that various error upper bounds can be found depending on the argument dependence of drift's integrand. Moreover, we suspect that the informational cost of the algorithm could further be reduced with the usage of the Multilevel Monte Carlo algorithm regarding the approximation of the drift. Nevertheless, we leave this here as an open question for future research.
In the remaining part of the paper, we conducted numerical experiments using GPU architecture. The numerical experiments show that the estimated error conforms with the obtained upper bounds. Hence, we show that the randomized Euler scheme is a suitable method for approximating trajectories of SDEs \eqref{main_equation}. By product, in numerical experiments, we show that the algorithm can be used as an optimization method (due to its connection with the perturbed SGD method). Nevertheless, there are no strict theoretical results on finding the actual local minima. In our future work, we especially plan to go further in exploring the relationship between SDEs and gradient descent algorithms. 
\section{Appendix}
\label{sec:appendix}
\begin{fact}
\label{fact_basic_properties}
The proof of the following fact is straightforward, so we skip the
details.
\begin{enumerate}
\item[i)] Let $H \in \mathcal{A}(L,K,p),$ then
\begin{equation*}
\|H(\xi, x)\|_p \leq \max\{\|H(\xi,0)\|_p, \|L\|_{L^P(\mathcal{T}, \mathcal{M}, \mu)}\}(1 + \| x \|),
\end{equation*}
for all $x \in \mathbb{R}^{d}.$
\item[ii)] Let $H \in \mathcal{A}(L,K,p)$ and let $a(x) = \mathbb{E}(H(\xi, x)),$ then
\begin{equation*}
\|a(x) - a(y)\| \leq \Big{(}\int\limits_{\mathcal{T}}L(t)\mu(\rd t)\Big{)}\| x - y \|,
\end{equation*}
for all $x, y \in \mathbb{R}^d.$
\item[iii)] Let $H \in \mathcal{A}(L,K,p)$ and let $a(x) = \mathbb{E}(H(\xi, x)),$ then
\begin{equation*}
\|a(x)\| \leq \max\{\|H(\xi,0)\|_p, \|L\|_{L^P(\mathcal{T}, \mathcal{M}, \mu)}\}(1 + \| x \|),
\end{equation*}
for all $x \in \mathbb{R}^d.$
\item[iv)]  Let $b \in \lip(K),$ then
\begin{equation*}
\| b(x) \| \leq \max\{\|b(0)\|, K\}(1 + \|x\|),
\end{equation*}
for all $x \in \mathbb{R}^d.$
\end{enumerate}
\end{fact}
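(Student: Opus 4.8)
\smallskip
\noindent\textbf{Proof proposal for Fact~\ref{fact_basic_properties}.}
The plan is to read each of (i)--(iv) off directly from the defining inequalities of $\mathcal{A}(L,K,p)$ and $\lip(K)$, combined only with the triangle inequality and Jensen's/Minkowski's inequality. Throughout I would use that $\mu$ is the law of $\xi$, so that $\E\,\phi(\xi)=\int_{\mathcal{T}}\phi(t)\,\mu(\rd t)$ for every nonnegative measurable $\phi$; in particular $\E\,L(\xi)^{p}=\|L\|_{p}^{p}$ and $\E\,L(\xi)=\|L\|_{1}$, the latter being finite because $\mu$ is a probability measure and $L\in L^{p}(\mathcal{T},\mathcal{M},\mu)$ with $p\ge 1$, which also makes $a(x)=\E(H(\xi,x))$ well defined via (H2).

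For (i), I would take $y=0$ in (H3) to get the pointwise bound $\|H(t,x)\|\le\|H(t,0)\|+L(t)\|x\|$ for $\mu$-a.a.\ $t$, substitute $t=\xi$, and take $L^{p}(\Omega)$-norms; monotonicity of the $L^{p}$-norm on nonnegative functions followed by Minkowski's inequality gives $\|H(\xi,x)\|_{p}\le\|H(\xi,0)\|_{p}+\|L\|_{p}\|x\|$, and replacing both coefficients by their maximum together with $1\le 1+\|x\|$ finishes it. For (ii), I would write $a(x)-a(y)=\E\bigl(H(\xi,x)-H(\xi,y)\bigr)$, pull the norm inside the expectation by Jensen, and apply (H3) pathwise in $\xi$ to obtain $\|a(x)-a(y)\|\le\E\bigl(L(\xi)\,\|x-y\|\bigr)=\|L\|_{1}\,\|x-y\|$. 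For (iii), I would combine $\|a(x)\|=\|\E H(\xi,x)\|\le\E\|H(\xi,x)\|\le\|H(\xi,x)\|_{p}$, the last step being Lyapunov's inequality (valid since $p\ge 2\ge 1$), with the estimate from (i). For (iv), taking $y=0$ in (B0) gives $\|b(x)\|\le\|b(x)-b(0)\|+\|b(0)\|\le K\|x\|+\|b(0)\|\le\max\{\|b(0)\|,K\}(1+\|x\|)$.

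I do not expect any genuine obstacle here; this is precisely why the statement is labelled straightforward. The only points deserving a moment's care are the measure-theoretic identifications $\E\,L(\xi)^{p}=\|L\|_{p}^{p}$ and $\E\,L(\xi)=\|L\|_{1}$, which rely on $\mu$ being the distribution of $\xi$, and the passage in (i) and (iii) from an inequality holding for $\mu$-almost every $t$ to the corresponding inequality for the random vector $H(\xi,x)$, which is legitimate because the exceptional $\mu$-null set pulls back under $\xi$ to a $\mathbb{P}$-null set.
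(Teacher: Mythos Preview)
Your proposal is correct and complete. The paper explicitly omits the proof (``The proof of the following fact is straightforward, so we skip the details''), so there is no argument to compare against; your derivations of (i)--(iv) from (H2), (H3), (B0) via the triangle/Minkowski/Jensen/Lyapunov inequalities are exactly what the authors had in mind.
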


\begin{fact}
\label{fact_finite_alg}
For all $n \in \mathbb{N}$ and $ M \in \mathbb{N}$,it holds that
\begin{equation*}
\sup\limits_{0\leq t \leq T}\|\tilde{X}_{n,M}^{RE}(t)\|_p < +\infty
\end{equation*}
\end{fact}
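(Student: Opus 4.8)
The plan is to prove the bound in two stages: first show by induction on $k$ that each grid value $X_{n,M}^{RE}(t_k)$ lies in $L^p(\Omega)$, and then transfer this to the continuous interpolation $\tilde X_{n,M}^{RE}(t)$ on the interior of each mesh subinterval. Since the constant is permitted to depend on the \emph{fixed} parameters $n$ and $M$, no uniformity is required and the crudest estimates suffice; in particular the bound may (and will) blow up with $n$, which is irrelevant here. This is why the argument is markedly simpler than that of Lemma \ref{lemma_finite_val}.

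For the induction the base case is trivial, as $X_{n,M}^{RE}(0)=\eta$ is deterministic. For the step, apply Minkowski's inequality in $L^p(\Omega)$ to
\[
X_{n,M}^{RE}(t_{k+1}) = X_{n,M}^{RE}(t_k) + \frac{h}{M}\sum_{j=1}^M H(\xi_j^{k+1},X_{n,M}^{RE}(t_k)) + b(X_{n,M}^{RE}(t_k))\,\Delta W_k ,
\]
which splits the estimate into three parts. For the drift part, combine (H2) and (H3) into the pointwise bound $\|H(\xi_j^{k+1},x)\|\le\|H(\xi_j^{k+1},0)\|+L(\xi_j^{k+1})\|x\|$; since $X_{n,M}^{RE}(t_k)$ is independent of $(\xi_1^{k+1},\dots,\xi_M^{k+1})$ (the structural observation recorded just after \eqref{main_scheme}), taking $p$-th moments and factorizing gives $\|H(\xi_j^{k+1},X_{n,M}^{RE}(t_k))\|_p^p\le 2^{p-1}\bigl(K+\|L\|_p^p\,\|X_{n,M}^{RE}(t_k)\|_p^p\bigr)$, finite by the inductive hypothesis and because $L\in L^p(\mathcal T,\mathcal M,\mu)$. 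For the diffusion part, $\Delta W_k$ is independent of $X_{n,M}^{RE}(t_k)$, so $\|b(X_{n,M}^{RE}(t_k))\Delta W_k\|_p^p=\E\|b(X_{n,M}^{RE}(t_k))\|^p\cdot\E\|\Delta W_k\|^p$; the first factor is controlled via Fact \ref{fact_basic_properties}(iv) and the inductive hypothesis, and the second is a finite Gaussian moment. Hence $\|X_{n,M}^{RE}(t_{k+1})\|_p<+\infty$, closing the induction, and $\max_{0\le k\le n}\|X_{n,M}^{RE}(t_k)\|_p<+\infty$.

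To conclude, fix $t\in(t_\ell,t_{\ell+1}]$. Unwinding the definition of the continuous scheme and using $\tilde X_{n,M}^{RE}(t_k)=X_{n,M}^{RE}(t_k)$ one obtains
\[
\tilde X_{n,M}^{RE}(t) = X_{n,M}^{RE}(t_\ell) + \frac{t-t_\ell}{M}\sum_{j=1}^M H(\xi_j^{\ell+1},X_{n,M}^{RE}(t_\ell)) + b(X_{n,M}^{RE}(t_\ell))\bigl(W(t)-W(t_\ell)\bigr).
\]
Applying Minkowski and the very same independence/factorization arguments — now with $W(t)-W(t_\ell)$ in place of $\Delta W_k$, and using $t-t_\ell\le h$ together with $\E\|W(t)-W(t_\ell)\|^p\le C(t-t_\ell)^{p/2}\le Ch^{p/2}$ — bounds each of the three summands by a finite constant independent of $t$. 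Taking the maximum over the finitely many indices $\ell\in\{0,\dots,n-1\}$ yields $\sup_{0\le t\le T}\|\tilde X_{n,M}^{RE}(t)\|_p<+\infty$.

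The only point that needs care is the independence bookkeeping: one should check that $X_{n,M}^{RE}(t_k)$ is measurable with respect to $\sigma(\{\xi_j^i:i\le k\})\vee\sigma(\{\Delta W_i:i\le k-1\})$, hence independent both of $(\xi_1^{k+1},\dots,\xi_M^{k+1})$ and of the increment $W(t)-W(t_k)$ for $t\in(t_k,t_{k+1}]$ — this is exactly the fact established in the proof of Lemma \ref{lem_a_diff}. Everything else reduces to Minkowski's inequality, (H1)–(H3), Fact \ref{fact_basic_properties}, and finiteness of Gaussian moments.
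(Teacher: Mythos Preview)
Your argument is correct and follows essentially the same route as the paper's proof: induction on the grid values via Minkowski's inequality, using independence to factor the drift and diffusion contributions and the linear-growth bounds from Fact~\ref{fact_basic_properties}. Your treatment of the continuous extension is in fact more careful than the paper's, which simply asserts that $\tilde X_{n,M}^{RE}$ is a ``linear interpolation'' of $X_{n,M}^{RE}$ (strictly speaking this is only true for the drift part, so your explicit bound on $b(X_{n,M}^{RE}(t_\ell))(W(t)-W(t_\ell))$ is the right thing to do); one tiny slip is that the diffusion factorization should read $\le$ rather than $=$, since $\|b(X)\Delta W\|\le\|b(X)\|\,\|\Delta W\|$ is an inequality in the matrix--vector case.
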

\begin{proof}
Note that it suffices to prove that
\begin{equation*}
\max\limits_{k=0,\ldots,n}\|X_{n,M}^{RE}(t_k)\|_p < +\infty,
\end{equation*}
since $\tilde{X}_{n,M}^{RE}$ is a linear interpolation of $X_{n,M}^{RE}.$
For all $n, M \in \mathbb{N}$ and $k=0,\ldots,n-1,$ random variables $b(X_{n,M}^{RE}(t_k))$ and $\Delta W_k$ are independent. Henceforth,
\begin{equation*}
\begin{split}
\| X_{n,M}^{RE}(t_{k+1})\|_p & \leq  \|X_{n,M}^{RE}(t_{k})\|_p + \frac{h}{M} \sum \limits_{j=1}^{M} \|H(\xi_j^{k+1} , X_{n,M}^{RE}(t_{k}))\|_p + \|b(X_{n,M}^{RE}(t_{k}))\Delta W_k\|_p \\
& = \|X_{n,M}^{RE}(t_{k})\|_p + h \|H(\xi, X_{n,M}^{RE}(t_{k}))\|_p + \|b(X_{n,M}^{RE}(t_{k}))\Delta W_k\|_p \\
& \leq \|X_{n,M}^{RE}(t_{k})\|_p + T \|H(\xi, X_{n,M}^{RE}(t_{k}))\|_p + \|b(X_{n,M}^{RE}(t_{k}))\|_p \|\Delta W_k\|_p.
\end{split}
\end{equation*}
Finally, the thesis follows immediately from the aforementioned observation, mathematical induction, and at most linear growth of $H$ and $b$ from Fact \ref{fact_basic_properties}.
\end{proof}

\begin{lemma}
\label{lem_a_diff}
There exists constant $\tilde{C}_p\in [0,+\infty)$ depending only on $\eta, T$ and parameters of the class $\mathcal{F}(L,K,p),$ such that for all $(H,b) \in \mathcal{F}(L,K,p), M,n \in \mathbb{N}$ and all $k=0,1,...,n-1$ the following inequality holds
\begin{equation*}
\Big{\|} a(X_{n,M}^{RE}(t_k)) - \frac{1}{M}\sum\limits_{j=1}^{M} H(\xi_j^{k+1}, X_{n,M}^{RE}(t_k)) \Big{\|}_{p}
\leq \tilde{C}_p \Big{(}1 + \max\limits_{0 \leq i \leq n} \|X_{n,M}^{RE}(t_i) \|_p \Big{)}M^{-1/2}.
\end{equation*}
\end{lemma}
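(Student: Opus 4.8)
The plan is to separate the error at step $k$ into the intrinsic Monte Carlo error of approximating the expectation $a(x)=\mathbb{E}\,H(\xi,x)$ by the empirical mean $\tfrac1M\sum_j H(\xi_j^{k+1},x)$ \emph{for a frozen state} $x$, and then to deal with the fact that the state $X_{n,M}^{RE}(t_k)$ is itself random. The key structural fact that makes this work is that the sample $(\xi_1^{k+1},\dots,\xi_M^{k+1})$ drawn at step $k$ is \emph{fresh}, i.e.\ independent of $X_{n,M}^{RE}(t_k)$.

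First I would establish this independence. By induction on $k$ using the recursion \eqref{main_scheme}, $X_{n,M}^{RE}(t_k)$ is measurable with respect to $\sigma(\Delta W_0,\dots,\Delta W_{k-1})\vee\sigma(\xi_j^{i}\,:\,1\le i\le k,\ 1\le j\le M)$ (the state at time $t_k$ only uses increments and randomizations with superscripts up to $k$). Since the double array $(\xi_j^{i})_{i,j}$ is i.i.d.\ and independent of the Wiener process, $\sigma(X_{n,M}^{RE}(t_k))$ is independent of $\sigma(\xi_1^{k+1},\dots,\xi_M^{k+1})$, and the latter are i.i.d.\ copies of $\xi$, so $a(x)=\mathbb{E}\,H(\xi_j^{k+1},x)$ for every $x\in\mathbb{R}^d$ and every $j$.

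Next, for fixed $x\in\mathbb{R}^d$ put $Y_j:=H(\xi_j^{k+1},x)-a(x)$; these are i.i.d., mean-zero, $\mathbb{R}^d$-valued. Applying the Marcinkiewicz--Zygmund / Burkholder inequality (coordinatewise, together with the triangle inequality in $L^{p/2}$, which is legitimate since $p\ge 2$) gives $\bigl\|\tfrac1M\sum_{j=1}^{M}Y_j\bigr\|_p\le C_p\,M^{-1/2}\,\|Y_1\|_p$ with $C_p$ depending only on $p$ and $d$. By Jensen's inequality $\|a(x)\|\le\|H(\xi,x)\|_p$, so $\|Y_1\|_p\le 2\|H(\xi,x)\|_p\le 2\max\{\|H(\xi,0)\|_p,\|L\|_p\}(1+\|x\|)$ by Fact \ref{fact_basic_properties}(i). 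Hence, with $g(x):=\mathbb{E}\bigl\|a(x)-\tfrac1M\sum_{j}H(\xi_j^{k+1},x)\bigr\|^p$, we get $g(x)\le \bar C_p\,M^{-p/2}(1+\|x\|)^p$, where $\bar C_p$ depends only on $p$, $d$ and the class parameters.

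Finally I would integrate out the state: by Step~1 and the standard ``freezing'' lemma for functions of independent variables, $\mathbb{E}\bigl\|a(X_{n,M}^{RE}(t_k))-\tfrac1M\sum_{j}H(\xi_j^{k+1},X_{n,M}^{RE}(t_k))\bigr\|^p=\mathbb{E}\,g(X_{n,M}^{RE}(t_k))\le \bar C_p M^{-p/2}\,\mathbb{E}(1+\|X_{n,M}^{RE}(t_k)\|)^p$, and then $\mathbb{E}(1+\|X_{n,M}^{RE}(t_k)\|)^p\le 2^{p-1}(1+\|X_{n,M}^{RE}(t_k)\|_p^p)\le 2^{p-1}\bigl(1+\max_{0\le i\le n}\|X_{n,M}^{RE}(t_i)\|_p\bigr)^p$; taking $p$-th roots yields the assertion with $\tilde C_p=(2^{p-1}\bar C_p)^{1/p}$, which is finite (the quantities involved are finite by Fact \ref{fact_finite_alg}). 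The main obstacle is not the probabilistic inequality in Step~3 (which is routine once the i.i.d.\ mean-zero structure is isolated) but Steps~1 and~2 together: one has to argue cleanly that the step-$k$ sample does not enter $X_{n,M}^{RE}(t_k)$, so that conditioning collapses the error to the deterministic function $g$, after which the linear-growth bound on $H$ propagates the state dependence into the $\max_i\|X_{n,M}^{RE}(t_i)\|_p$ factor.
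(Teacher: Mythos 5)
Your proposal is correct and follows essentially the same route as the paper: establish independence of the fresh sample $(\xi_1^{k+1},\dots,\xi_M^{k+1})$ from $X_{n,M}^{RE}(t_k)$, apply the freezing lemma to reduce to a fixed $x$, obtain the $M^{-1/2}$ rate from a Burkholder/Marcinkiewicz--Zygmund bound for the centered i.i.d.\ sum (the paper does this by exhibiting the partial sums as a discrete martingale and invoking BDG), and finish with the linear-growth bound on $H$ from Fact \ref{fact_basic_properties}. The only cosmetic difference is that you quote the moment inequality directly rather than constructing the martingale explicitly.
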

\begin{proof}
From the fact that $a$ is Borel-measurable, we obtain that $\sigma\Big{(}X_{n,M}^{RE}(t_k)\Big{)} \subset \mathcal{G}_{k}$ for all $k=1,\ldots,n$ where $\mathcal{G}_{k}=\sigma\Big{(} W(t_1),\ldots,W(t_k),\xi_{1}^{1},\ldots,\xi_{M}^{1},\ldots,\xi_1^k,\ldots,\xi_{M}^{k} \Big{)}$ and $\mathcal{G}_{k} \perp\!\!\!\perp \sigma\Big{(} \xi_{1}^{k+1},\ldots,\xi_{M}^{k+1} \Big{)}.$ Moreover, since $\eta$ is deterministic it follows that $\sigma\big{(}X_{n,M}^{RE}(t_0)\big{)} = \{\emptyset, \Omega \} =: \mathcal{G}_{0}.$
Thus, from the properties of conditional expectation, we obtain that
 \begin{equation*}
 \begin{split}
\mathbb{E} \Big{\|} a(X_{n,M}^{RE}(t_k)) - \frac{1}{M}\sum \limits_{j=1}^{M} H(\xi_j^{k+1}, X_{n,M}^{RE}(t_k)) \Big{\|}^{p} & = \mathbb{E} \Big{(} \mathbb{E} \Big{\|} a(X_{n,M}^{RE}(t_k)) - \frac{1}{M}\sum\limits_{j=1}^{M}H(\xi_j^{k+1}, X_{n,M}^{RE}(t_k)) \Big{\|}^p \Big{|} \mathcal{G}_k \Big{)} \\
& = \mathbb{E} \Big{\|} a(x) - \frac{1}{M}\sum\limits_{j=1}^{M} H(\xi_j^{k+1}, x) \Big{\|}^p \Big{|}_{x=X_{n,M}^{RE}(t_k)} \\
& = \mathbb{E} \Big{\|} \int\limits_{\mathcal{T}} H(t,x)\mu(\rd t) - \frac{1}{M}\sum\limits_{j=1}^{M} H(\xi_j^{k+1}, x) \Big{\|}^p \Big{|}_{x=X_{n,M}^{RE}(t_k)}.
\end{split}
\end{equation*}
Note that $\frac{1}{M}\sum\limits_{j=1}^{M} H(\xi_j^{k+1}, x)$ is the approximation of $\int\limits_{\mathcal{T}} H(t,x)\mu(\rd t)$ for all $x \in \mathbb{R}^{d}.$
Let
\begin{equation*}
z_i^{k+1}(x) = \sum\limits_{j=1}^{i}\Big{(}H(\xi_j^{k+1},x)-a(x)\Big{)}
\end{equation*}
for $i\in \{1,\ldots,M\}$ and $z_{0}^{k+1}(x):=0,$ then 
\begin{equation*}
\frac{1}{M}z_M^{k+1}(x) = \frac{1}{M} \sum\limits_{j=1}^{M} H(\xi_j^{k+1},x)- a(x)
\end{equation*}
and
\begin{equation*}
\mathbb{E} \Big{\|} a(x) - \frac{1}{M}\sum\limits_{j=1}^{M} H(\xi_j^{k+1}, x) \Big{\|}^p = \frac{1}{M^p} \mathbb{E} \Big{\|} z_{M}^{k+1}(x) \Big{\|}^{p}.    
\end{equation*}
Let $\mathcal{H}_i^{k+1} := \sigma\Big{(}\xi_1^{k+1},\ldots,\xi_i^{k+1}\Big{)}$ for $k=0,1, \ldots, n-1,$ and let $\mathcal{H}_0^{k+1} := \{\emptyset, \Omega \}.$ Note that
\begin{equation*}
z_i^{k+1}(x) = \Big{(}H(\xi_1^{k+1},x) - a(x)\Big{)} + \ldots + \Big{(} H(\xi_i^{k+1},x) -a(x)\Big{)}
\end{equation*}
and therefore $\sigma\Big{(}z_i^{k+1}(x)\Big{)} \subset \mathcal{H}_i^{k+1}.$
Moreover, since $H(\xi_{i+1}^{k+1},x)$ is $\mathcal{H}_{i}^{k+1}$-independent, we obtain that
\begin{equation*}
\begin{split}
\mathbb{E}\Big{(}z_{i+1}^{k+1}(x) - z_{i}^{k+1}(x) \Big{|}  \mathcal{H}_{i}^{k+1} \Big{)} & 
= \mathbb{E}\Big{(} H(\xi_{i+1}^{k+1}, x) - a(x)  \Big{|}  \mathcal{H}_{i}^{k+1} \Big{)} \\
& = \mathbb{E}\Big{(} H(\xi_{i+1}^{k+1}, x) - a(x) \Big{)} \\
& = \mathbb{E}\Big{(} H(\xi_{i+1}^{k+1}, x)\Big{)} - a(x) = 0
\end{split}
\end{equation*}
which means that $(z_{i}^{k+1}(x), \mathcal{H}_{i}^{k+1})_{i=1,\ldots,M}$ is $\mathbb{R}^{d}$-valued martingale for every $x\in \mathbb{R}^{d}$ and $k=0,1, \ldots, n-1.$
Let $\Big{[}z^{k+1}(x)\Big{]}_{n}$ denote a quadratic variation of $(z_i^{k+1}(x))_{i=0,\ldots,n}.$ Note that
\begin{equation*}
\Big{[}z_i^{k+1}(x)\Big{]}_{n} = \sum\limits_{j=1}^{i}\Big{\|} z_j^{k+1}(x) - z_{j-1}^{k+1}(x) \Big{\|}^{2} = \sum\limits_{j=1}^{i}\Big{\|} H(\xi_j^{k+1},x) - a(x) \Big{\|}^{2}.
\end{equation*}
From Burkholder-Davis-Gundy inequality and Jensen inequality, we obtain that
\begin{equation*}
\begin{split}
\mathbb{E}\| z_{M}^{k+1}(x) \|^{p} \leq \mathbb{E}\Big{[} \max\limits_{0\leq i \leq M} \| z_{i}^{k+1}(x) \| \Big{]}  
& \leq C_p^p \mathbb{E}\Big{[}z^{k+1}(x)\Big{]}_{M}^{p/2} \\
& = C_p^p \mathbb{E} \Big{(} \sum\limits_{j=1}^{M} \| H(\xi_j^{k+1}, x) - a(x) \|^{2} \Big{)}^{p/2} \\
& \leq C_p^p M^{p/2 - 1} \sum\limits_{j=1}^{M}\mathbb{E}\| H(\xi_j^{k+1}, x) - a(x) \|^{p} \\
& \leq C_p^p M^{p/2} \mathbb{E}\| H(\xi, x) - a(x) \|^{p}
\end{split}
\end{equation*}
where $C_{p}^{p}\in [0, +\infty)$ does not depend on $k$ and $x.$ Let $K_p(x) := C_p \| H(\xi, x) - a(x) \|_{L^p}.$
Then,
\begin{equation*}
\mathbb{E} \Big{\|} a(x) - \frac{1}{M}\sum\limits_{j=1}^{M} H(\xi_j^{k+1}, x) \Big{\|}^p = \frac{\mathbb{E}\| z_{M}^{k+1}(x)\|^{p}}{M^{p}} \leq K_{p}^{p}(x)M^{-p/2},
\end{equation*}
and
\begin{equation*}
K_{p}(x) \leq C_p\|H(\xi,x)\|_{L^p} + C_p \| a(x) \|_{L^{p}}.
\end{equation*}
If we assume that $\int\limits_{\mathcal{T}}(L(t))^p\mu(\rd t) < +\infty,$ then
\begin{equation*}
\begin{split}
\Big{|} \| H(\xi, x) - H(\xi, 0) \|_{L^{p}} \Big{|} 
& \leq \| H(\xi, x) - H(\xi, 0) \|_p \\
& = \Big{(} \mathbb{E} \|H(\xi,x) - H(\xi,0) \|^{p} \Big{)}^{1/p} \\
& = \Big{(} \int\limits_{\mathcal{T}} \|H(\xi,x) - H(\xi,0) \|^p \mu(\rd t) \Big{)}^{1/p} \\
& \leq \Big{(} \int\limits_{\mathcal{T}} (L(t))^p \mu(\rd t) \|x\|^p \Big{)}^{1/p} \leq \| L \|_{p} \|x\|^p.
\end{split}
\end{equation*}
Moreover, if $\mathbb{E}\|H(\xi, 0)\|^p < +\infty,$ then there exists constant $K_1\in [0, +\infty)$ (see fact \ref{fact_basic_properties}) depending only on the parameters of the class $\mathcal{F}(L,K,p),$ such that 
\begin{equation*}
\|H(\xi,x)\|_{p} \leq \|H(\xi,0)\|_{p} + \|L\|_p\|x\| \leq K_1(1+\|x\|)
\end{equation*}
and therefore $0 \leq K_p(x) \leq \tilde{C}_p(1+ \| x \|)$ which results in
\begin{equation*}
\Big{\|} a(X_{n,M}^{RE}(t_k)) - \frac{1}{M}\sum\limits_{j=1}^{M} H(\xi_j^{k+1}, X_{n,M}^{RE}(t_k)) \Big{\|}_{p}
\leq \tilde{C}_p \Big{(}1 + \max\limits_{0 \leq i \leq n} \|X_{n,M}^{RE}(t_i) \|_p \Big{)}M^{-1/2}.
\end{equation*}    
\end{proof}

{\noindent\bf Proof of Lemma \ref{lemma_finite_val}. }
In this proof, we proceed similarly as in the proof of lemma \ref{lemma_error} leveraging a continuous version of the Euler algorithm. 
First, note that
\begin{equation*}
\mathbb{E}\|\tilde{X}_{n,M}^{RE}(t) \|^p \leq 3^{p-1}(\|\eta\|^p + \mathbb{E}\|\tilde{A}_n^M(t)\|^p + \mathbb{E}\| \tilde{B}_n^M(t)\|^p)
\end{equation*}
where
\begin{equation*}
\mathbb{E}\|\tilde{A}_n^M(t)\|^p  = \mathbb{E}\Big{\|} \int\limits_{0}^{t}\sum\limits_{k=0}^{n-1}\Big{(}\frac{1}{M}\sum\limits_{j=1}^{M}H(\xi_j^{k+1}, \tilde{X}_{n,M}^{RE}(t_k))\Big{)}\mathds{1}_{(t_k, t_{k+1}]}(s)\rd s \Big{\|}^p,
\end{equation*}
and
\begin{equation*}
 \mathbb{E}\|\tilde{B}_n^M(t)\|^p = \mathbb{E}\Big{\|} \int\limits_{0}^{t}\sum\limits_{k=0}^{n-1}b(\tilde{X}_{n,M}^{RE}(t_k))\mathds{1}_{(t_k, t_{k+1}]}(s)\rd W(s) \Big{\|}^p.
\end{equation*}
From H\"older's inequality, the fact that intervals $(t_k, t_{k+1}]$ are disjoint for all $k=0,\ldots,n-1,$ equidistant mesh, and finally Fubini's theorem, we obtain that
\begin{equation*}
\begin{split}
\mathbb{E}\|\tilde{A}_n^M(t)\|^p & \leq T^{p-1} \sum\limits_{k=0}^{n-1} \int\limits_{0}^{t}\mathbb{E}\Big{\|} \frac{1}{M}\sum\limits_{j=1}^{M}H(\xi_j^{k+1}, \tilde{X}_{n,M}^{RE}(t_k))\Big{\|}^p\mathds{1}_{(t_k, t_{k+1}]}(s)\rd s .
\end{split}
\end{equation*}
Note that, from H\"older's inequality and i) in fact \ref{fact_basic_properties}, we obtain 
\begin{equation*}
\begin{split}
\mathbb{E}\Big{\|} \frac{1}{M}\sum\limits_{j=1}^{M}H(\xi_j^{k+1}, \tilde{X}_{n,M}^{RE}(t_k))\Big{\|}^p & \leq \frac{1}{M}\sum\limits_{j=1}^{M}\mathbb{E}\Big{\|} H(\xi_j^{k+1}, \tilde{X}_{n,M}^{RE}(t_k))\Big{\|}^p \\
& \leq 2^{p-1}C_{1}^{p}\Big{(}1 + \mathbb{E}\|\tilde{X}_{n,M}^{RE}(t_k)\|^{p}\Big{)}.
\end{split}
\end{equation*}
Thus,
\begin{equation*}
\begin{split}
\mathbb{E}\|\tilde{A}_{n}^{M}(t)\|^{p} 
& \leq T^{p-1}2^{p-1}C_{1}^{p} \sum\limits_{k=0}^{n-1} \int\limits_{0}^{t} \Big{(}1 + \mathbb{E}\|\tilde{X}_{n,M}^{RE}(t_k)\|^{p}\Big{)} \mathds{1}_{(t_k, t_{k+1}]}(s)\rd s \\
& \leq K_1 + K_2\int\limits_{0}^{t} \sup\limits_{0\leq u \leq s}\mathbb{E}\|\tilde{X}_{n,M}^{RE}(u)\|^{p}\rd s .
\end{split}
\end{equation*}
In a similar fashion,
we obtain from H\"older's inequality and Burkholder inequality that
\begin{equation*}
\begin{split}
\mathbb{E}\|\tilde{B}_n^M(t)\|^p & 
\leq \tilde{C}_p \mathbb{E}\Big{(}\int\limits_{0}^{t}\sum_{k=0}^{n-1}\|b(\tilde{X}_{n,M}^{RE}(t_k))\|^{2}\mathds{1}_{(t_k, t_{k+1}]}(s)\rd s \Big{)}^{p/2}\\
& \leq \tilde{C}_p T^{p/2 - 1}\int\limits_{0}^{t}\sum_{k=0}^{n-1}\mathbb{E}\|b(\tilde{X}_{n,M}^{RE}(t_k))\|^{p}\mathds{1}_{(t_k, t_{k+1}]}(s)\rd s \\
& \leq \tilde{C}_p T^{p/2 - 1}2^{p-1}C_{2}^{p}\int\limits_{0}^{t}\sum_{k=0}^{n-1}\Big{(}1+\mathbb{E}\|\tilde{X}_{n,M}^{RE}(t_k)\|^{p}\Big{)}\mathds{1}_{(t_k, t_{k+1}]}(s)\rd s \\
& \leq
K_3 + K_4 \int\limits_{0}^{t}\sup\limits_{0\leq u \leq s}\mathbb{E}\Big{\|} \tilde{X}_{n,M}^{RE}(u)\Big{\|}^{p}\rd s.
\end{split}
\end{equation*}
Henceforth,
\begin{equation*}
\begin{split}
\mathbb{E} \|\tilde{X}_{n,M}^{RE}(t)\|^p \leq 3^{p-1}\Big{(}\|\eta \|^{p} + K_1 + K_3 + (K_2 + K_4)\int\limits_{0}^{t} \sup\limits_{0\leq u \leq s}\mathbb{E}\|\tilde{X}_{n,M}^{RE}(u)\|^{p}\rd s  \Big{)}.
\end{split}
\end{equation*}
Since function $[0, T]\ni t \mapsto \sup\limits_{0\leq u\leq t} \mathbb{E}\| \tilde{X}_{n,M}^{RE}(u)\|^p$
is Borel-measurable (as a non-decreasing function) and bounded (by fact \ref{fact_finite_alg}),
applying the Gronwall's
lemma yields
\begin{equation*}
\sup\limits_{0 \leq s \leq t} \mathbb{E}\| \tilde{X}_{n,M}^{RE}(t)\|^p \leq C
\end{equation*}
for some $C \in [0, +\infty)$ which depends only on $\eta,T$ and parameters of the class $\mathcal{F}(L,K,p).$

\noindent\newline\newline {\bf Acknowledgements}
We gratefully acknowledge the support of the Leibniz Center for Informatics,
where several discussions about this research were held during the Dagstuhl Seminar "Algorithms and Complexity for Continuous Problems" (Seminar ID 23351). We want to thank the unknown reviewers for their valuable comments and suggestions on improving the overall quality of this work.

\noindent\newline\newline {\bf Statements and Declarations}

The authors declare no competing interests.
No funding was received to assist with the preparation of this manuscript.
The datasets generated during and/or analysed during the current study are available from the authors on reasonable request.

\bibliographystyle{siam}
\bibliography{mybib}

\end{document}